\documentclass[12pt,twoside]{article}
\usepackage{a4}
\usepackage{jgg}
\usepackage{graphics}
\usepackage{epstopdf}
\usepackage{overpic}
\usepackage{amsfonts,amssymb,amsmath,amsthm}

\usepackage{dsfont}
\usepackage{color}
\usepackage{nicefrac}

\newtheoremstyle{mystyle}
  {3pt}
  {3pt}
  {}
  {}
  {\bfseries}
  {:}
  {0.5 em}
  {}

\theoremstyle{mystyle}
\newtheorem{definition}{Definition}

\newtheorem{remark}{Remark}
\newtheorem{theorem}{Theorem}
\newtheorem{example}{Example}

\usepackage[latin1]{inputenc}	
\usepackage{placeins} 

\usepackage{hyperref}

\newcommand{\clifford}{\mathcal{C}\ell}

\begin{document}
\begin{JGGarticle}
{Reflections in Conics, Quadrics and Hyperquadrics via Clifford Algebra}
{}
{Daniel Klawitter}
{\JGGaddress{
Dresden University of Technology, Germany}
}
\begin{JGGabstract}\\
In this article we present a new and not fully employed geometric algebra model. With this model a generalization of the conformal geometric algebra model is achieved. We discuss the geometric objects that can be represented. Furthermore, we show that the Pin group of this geometric algebra corresponds to the group of inversions with respect to quadrics in principal position. We discuss the construction for the two- and three-dimensional case in detail and give the construction for arbitrary dimension.\\
[1mm]{\em Key Words: Clifford algebra, geometric algebra, generalized inversion, conic, quadric, hyperquadric}.\\
{\em MSC2010: 15A66, 51B99, 51M15, 51N15}.
\end{JGGabstract}

\section{Algebraic Background}
\noindent Before we introduce the quadric geometric algebra we review some algebraic background.
\subsection{Geometric Algebra}
\begin{definition}
Let $V$ be a real valued vector space of dimension $n$. Furthermore, let $b:V\mapsto \mathds{R}$ be a quadratic form on $V$. The pair $(V,b)$ is called \emph{quadratic space}.
\end{definition}
\noindent
We denote the Matrix belonging to $b$ by $\mathrm{B}_{ij}$ with $1\leq i,j\leq n$. Therefore $b(x_i,x_j)=\mathrm{B}_{ij}$ for some basis vectors $x_i$ and $x_j$.
\begin{definition}
The Clifford algebra is defined by the relations
\begin{equation}
\label{EQ1}
x_ix_j+x_jx_i=2\mathrm{B}_{ij},\quad 1\leq i,j\leq n.
\end{equation}
\end{definition}
\noindent
Usually the algebra is denoted by $\clifford(V,b)$.
By Silvester's law of inertia we can always find a basis $\lbrace e_1,\dots,e_n \rbrace$ of $V$ such that $e_i^2$ is either $1,-1$ or $0$.
\begin{definition}
The number of basis vectors that square to $(1,-1,0)$ is called \emph{signature} $(p,q,r)$. If $r\neq 0$ we call the geometric algebra \emph{degenerate}. We will denote this Clifford algebra by $\clifford_{(p,q,r)}$.
\end{definition}
\begin{remark}
A quadratic real space with signature $(p,q,0)$ is abbreviated by $\mathds{R}^{p,q}$.
\end{remark}
\noindent
With the new basis $\lbrace e_1,\dots,e_n \rbrace$ the relations \eqref{EQ1} become
\begin{equation*}
e_ie_j+e_je_i=0,\quad i\neq j \mbox{ and } e_ie_i=\mathrm{B}_{ii}.
\end{equation*}
In the remainder of this paper we shall abbreviate the product of basis elements with lists
\begin{equation*}
e_{12\dots k}:=e_1e_2\dots e_k,\, \mbox{with $0\leq k\leq n$}.
\end{equation*}
The $2^n$ monomials 
\begin{equation*}
e_{i_1}e_{i_2}\dots e_{i_k},\quad 0\leq k\leq n
\end{equation*}
form the standard basis of the Clifford algebra. The Clifford algebra and the exterior algebra are canonically isomorphic (as vector spaces). The dimension of a Clifford algebra is calculated by
\begin{equation*}
\dim \clifford_{(p,q,r)}=\sum\limits_{i=0}^n{\dim {\bigwedge}^i V}=\sum\limits_{i=0}^n\binom{n}{i}=2^n.
\end{equation*}
Moreover, the Clifford algebra $\clifford_{(p,q,r)}$ possesses a $\mathds{Z}_2$-grading,\ {\it i.e.}, it can be decomposed in an even and an odd part
\begin{equation*}
\clifford_{(p,q,r)}=\clifford_{(p,q,r)}^+ \oplus \clifford_{(p,q,r)}^-=\bigoplus\limits_{\substack{i=0\\i \text{ even}}}^n{{\bigwedge}^i V} \oplus \bigoplus\limits_{\substack{i=0\\i \text{ odd}}}^n{{\bigwedge}^i V}.
\end{equation*}
The even part $\clifford_{(p,q,r)}^+$ is a subalgebra, because the product of two even graded monomials must be even graded since the generators cancel only in pairs. Elements contained in ${\bigwedge}^i V$ are called \emph{$i$-blades} and any $\mathds{R}$-linear combination of $i$-blades is called a \emph{multi-vector}. The product of invertible $1$-blades is called a \emph{versor}.
\subsection{Clifford Algebra Involutions}
\noindent For our purposes two involutions that exist on each Clifford algebra are interesting. The \emph{conjugation} is an \emph{anti-automorphism} denoted by an asterisk, see \cite{selig:geometricfundamentalsofrobotics}. Its effect on generators is given by $e_i^\ast=-e_i$. There is no effect on scalars. Extending the conjugation by using linearity yields
\begin{equation}
(e_{i_1}e_{i_2}\dots e_{i_k})^\ast=(-1)^k e_{i_k}\dots e_{i_2}e_{i_1},\quad 0\leq k \leq n,\ 1 \leq i_1<\ldots <i_k\leq n.
\end{equation}
The geometric product of a $1$-blade $\mathfrak{v}=\sum\limits_{i=1}^{n}{x_ie_i}\in{\bigwedge}^1 V$ with its conjugate results in
\begin{equation*}
\mathfrak{v}\mathfrak{v}^\ast=-x_1^2-x_2^2-\dots-x_p^2+x_{p+1}^2+\dots +x_{p+q}^2=-b(\mathfrak{v},\mathfrak{v}).
\end{equation*}
The map $N:\clifford_{(p,q,r)}\mapsto\clifford_{(p,q,r)}$ with $N(\mathfrak{v}):=\mathfrak{v}\mathfrak{v}^\ast$ is called the \emph{norm} of the Clifford algebra and the inverse of a vector $\mathfrak{v}\in\bigwedge^1 V$ is computed by $\mathfrak{v}^{-1}:=\frac{\mathfrak{v}^\ast}{N(\mathfrak{v})}$. For general multi-vectors the determination can be found in \cite{fontijne:EfficientImplementationofGeometricAlgebra}. Note that in general not every element is invertible.
The other involution we are dealing with is the \emph{main involution}. It is denoted by $\alpha$ and defined by
\begin{equation*}
\alpha(e_{i_1}e_{i_2}\dots e_{i_k})=(-1)^k e_{i_1}e_{i_2}\dots e_{i_k},\quad 0\leq k \leq n,\ 1 \leq i_1<\ldots <i_k\leq n.
\end{equation*}
The main involution has no effect on the even subalgebra and it commutes with the conjugation, {\it i.e.}, $\alpha(\mathfrak{M}^\ast)=\alpha(\mathfrak{\mathfrak{M}})^\ast$ for arbitrary $\mathfrak{\mathfrak{M}}\in\clifford_{(p,q,r)}$.
\subsection{Clifford Algebra Products}
\noindent On $1$-blades, {\it i.e.}, vectors $\mathfrak{a},\mathfrak{b}\in{\bigwedge}^1 V$ we can write the inner product in terms of the geometric product
\begin{equation}\label{EQ10}
\mathfrak{a}\cdot \mathfrak{b}:=\frac{1}{2}(\mathfrak{a}\mathfrak{b}+\mathfrak{b}\mathfrak{a}).
\end{equation}
A generalization of the inner product to blades can be found in \cite{hestenes:cliffordalgebra}.
For $\mathfrak{A}\in{\bigwedge}^k V,\mathfrak{B}\in{\bigwedge}^l V$ the generalized inner product is defined by
\begin{equation*}
\mathfrak{A}\cdot \mathfrak{B}:=\left[ \mathfrak{A}\mathfrak{B} \right]_{\vert k-l\vert},
\end{equation*}
where $\left[ \cdot \right]_{m}\,m\in\mathds{N}$ denotes the grade-$m$ part. There is another product on $1$-blades, {\it i.e.}, the outer (or exterior) product
\begin{equation}\label{EQ12}
\mathfrak{a}\wedge \mathfrak{b}:=\frac{1}{2}(\mathfrak{a}\mathfrak{b}-\mathfrak{b}\mathfrak{a}).
\end{equation}
This product can also be generalized to blades, see again \cite{hestenes:cliffordalgebra}. For $\mathfrak{A}\in{\bigwedge}^k V,\mathfrak{B}\in{\bigwedge}^l V$ the generalized outer product is defined by
\begin{equation*}
\mathfrak{A}\wedge \mathfrak{B}:=\left[ \mathfrak{A}\mathfrak{B} \right]_{\vert k+l\vert}.
\end{equation*}
Thus, the exterior product of $\bigwedge V$ can be expressed in terms of the geometric product.
From equation \eqref{EQ10} and \eqref{EQ12} it follows that for $1$-blades the geometric product can be written as the sum of the inner and the outer product
\begin{equation*}
\mathfrak{ab}=\mathfrak{a}\cdot \mathfrak{b}+\mathfrak{a}\wedge \mathfrak{b}.
\end{equation*}
More generally, this can be defined for multivectors with the commutator and the anti-commutator product, see \cite{perwass:geometricalgebra}. For treating geometric entities within this algebra context the definition of the \emph{inner product null space} and its dual the \emph{outer product null space} is needed.
\begin{definition}
The \emph{inner product null space} (IPNS) of a blade $\mathfrak{A}\in{\bigwedge}^k V$, cf. \cite{perwass:geometricalgebra}, is defined by
\begin{equation*}
\mathds{NI}(\mathfrak{A}):=\left\lbrace \mathfrak{v}\in{\bigwedge}^1 V:\mathfrak{v}\cdot \mathfrak{A}=0 \right\rbrace.
\end{equation*}
Moreover, the \emph{outer product null space} (OPNS) of a blade $\mathfrak{A}\in{\bigwedge}^k V$ is defined by
\begin{equation*}
\mathds{NO}(\mathfrak{A}):=\left\lbrace \mathfrak{v}\in{\bigwedge}^1 V:\mathfrak{v}\wedge \mathfrak{A}=0 \right\rbrace.
\end{equation*}
\end{definition}
\subsection{Pin and Spin groups}
\noindent With respect to the geometric product the units of a Clifford algebra denoted by $\clifford_{(p,q,r)}^\times$ form a group.
\begin{definition}
The \emph{Clifford group} is defined by
\begin{equation*}
\Gamma(\clifford_{(p,q,r)}):=\left\lbrace \mathfrak{g}\in\clifford_{(p,q,r)}^\times\mid \alpha(\mathfrak{g})\mathfrak{v} \mathfrak{g}^{-1}\in {\bigwedge}^1 V \mbox{ for all } \mathfrak{v}\in{\bigwedge}^1 V \right\rbrace.
\end{equation*}
\end{definition}
\noindent
A proof that $\Gamma(\clifford_{(p,q,r)})$ is indeed a group with respect to the geometric product can be found in \cite{gallier:cliffordalgebrascliffordgroups}.
We define two important subgroups of the Clifford group.
\begin{definition}
The \emph{Pin group} is the subgroup of the Clifford group with\linebreak $N(\mathfrak{g})=1$.
\begin{equation*}
\mbox{Pin}_{(p,q,r)}\!:\!=\left\lbrace \mathfrak{g}\in\clifford_{(p,q,r)}\mid \mathfrak{gg}^\ast\!=\!\pm 1\mbox{ and } \alpha(\mathfrak{g})\mathfrak{v}\mathfrak{g}^\ast\in {\bigwedge}^1 V \mbox{ for all } \mathfrak{v}\in{\bigwedge}^1 V \right\rbrace.
\end{equation*}
Furthermore, we define the \emph{Spin group} by $\mathrm{Pin}_{(p,q,r)}\cap \clifford_{(p,q,r)}^+$
\begin{equation*}
\mbox{Spin}_{(p,q,r)}\!:=\!\left\lbrace \mathfrak{g}\in\clifford_{(p,q,r)}^+\mid \mathfrak{gg}^\ast\!=\!\pm 1\mbox{ and } \alpha(\mathfrak{g})\mathfrak{v}\mathfrak{g}^\ast\in {\bigwedge}^1 V \mbox{ for all } \mathfrak{v}\in{\bigwedge}^1 V \right\rbrace.
\end{equation*}
\end{definition}
\begin{remark}
For non-degenerate Clifford algebras the Pin group is a double cover of the orthogonal group of the quadratic space $(V,b)$. Moreover, the Spin group is a double cover of the special orthogonal group of $(V,b)$.
\end{remark}



\section{Quadric Geometric Algebra}\label{section3}
\noindent The geometric algebra we will study was first introduced by Zamora \cite{zamora:geometricalgebra}. We discuss the planar, {\it i.e.}, two-dimensional case in detail before we move on to higher dimensions.
\subsection{The Embedding}
The quadric geometric algebra for the two-dimensional case is constructed with a Clifford algebra over a six-dimensional vector space $V=\mathds{R}^6$. In fact the term quadric could be replaced by the term conic for the two-dimensional case. Without loss of generality we call it quadric geometric algebra and abbreviate this term with Q$n$GA, where $n$ denotes the dimension of the base space. The quadratic form we are using is derived by the quadratic form of the conformal geometric algebra used in \cite{dorst:gaviewer}:
\[\mathrm{Q}=\left( \begin{array}{cccccc}
 0 & 0 & -1 & 0 & 0 & 0\\
 0 & 1 &  0 & 0 & 0 & 0\\
-1 & 0 &  0 & 0 & 0 & 0\\
 0 & 0 &  0 & 0 & 0 & -1\\
 0 & 0 &  0 & 0 & 1 & 0\\
 0 & 0 &  0 & -1 & 0 & 0\\
\end{array}\right) .\]
The signature of the resulting algebra is $(p,q,r)=(4,2,0)$.
For every axis, {\it i.e.}, the $x$- and the $y$-axes a conformal embedding is performed, see \cite{dorst:geometricalgebra}. Therefore, we have the embedding
$\eta:\mathds{R}^2\rightarrow\bigwedge^1 V$,
\begin{align}
\mathds{R}^2\ni P&\mapsto \mathfrak{p}\in{\bigwedge}^1 V,\nonumber\\
P=(x,y)^\mathrm{T}&\mapsto e_1 + x e_2 + \frac{1}{2} x^2 e_3 + e_4 + y e_5 +\frac{1}{2} y^2 e_6=\mathfrak{p}.\label{EQ21}
\end{align}
Affine points $(x,y)^\mathrm{T}\in\mathds{R}^2$ are embedded as null vectors. This means 
\begin{equation}\label{EQ22}
\eta(P)^2=0 \mbox{ for $P\in\mathds{R}^2$.}
\end{equation}
The projection on the generator subspace spanned by $e_1,e_2$, and $e_3$ is denoted by subscript $x$ and the projection on $e_4,e_5,e_6$ by subscript $y$.
Due to the fact, that the embedding is conformal (see \cite{dorst:geometricalgebra,zamora:geometricalgebra}) for both axes we get the additional conditions:
\begin{equation}\label{EQ23}
\eta(P)_x^2=0,\quad \eta (P)_y^2=0.
\end{equation}
In the following we call grade-1 elements satisfying \eqref{EQ22} and \eqref{EQ23} \emph{embedded points}. Let $P_1=(x_1,y_1)^\mathrm{T}\in\mathds{R}^2$ and $P_2=(x_2,y_2)^\mathrm{T}\in\mathds{R}^2$ be two points. The inner product of their images under $\eta$ results in
\begin{align*}
\eta(P_1)\cdot\eta(P_2)&=-\frac{1}{2}x_2^2+x_1x_2-\frac{1}{2}x_1^2-\frac{1}{2}y_2^2+y_1y_2-\frac{1}{2}y_1^2\\
&=-\frac{1}{2}\left( (x_2-x_1)^2+(y_2-y_1)^2\right)\\
&=-\frac{1}{2}d^2_E(P_1,P_2),
\end{align*}
where $d_E(P_1,P_2)$ denotes the Euclidean distance between the points $P_1$ and $P_2$. Note that this formula only is true for normalized null vectors. This means that the homogeneous factors has to be equal to one. These vectors can be interpreted as images of points under $\eta$ and we call them normalized. They are characterized by
\begin{equation}\label{EQ27}
-e_3\cdot \mathfrak{p} = 1,\quad -e_6\cdot \mathfrak{p}=1.
\end{equation}
The inner product of an arbitrary embedded point with $e_3$ or $e_6$ is constant. Therefore, we can interpret these elements as points at infinity, see \cite{dorst:geometricalgebra}. Furthermore, the combination of the conditions \eqref{EQ27} results in
\begin{equation*}
-(e_3+e_6)\cdot \mathfrak{p} =2.
\end{equation*}
Thus, the geometric entity corresponding $e_3+e_6$ can be interpreted as point at infinity. The elements $e_3$ and $e_6$ represent the ideal points corresponding to each axis and $e_3+e_6$ represents a point at infinity contained in both axes. Geometrically, these three algebra elements describe the same point, {\it i.e.}, the point at infinity $\infty$ although they differ algebraically.\\

\noindent There are grade-1 elements that satisfy the conditions \eqref{EQ22} and \eqref{EQ23} without having a preimage in $\mathds{R}^2$. For example $e_3,e_6,e_3+e_6$ and algebra elements of the form:
\begin{equation*}
\mathfrak{u}_1=e_1+x_0e_2+\frac{1}{2}x_0^2e_3+e_6,\quad\quad
\mathfrak{u}_2=e_3+e_4+y_0 e_5+\frac{1}{2}y_0^2 e_6.
\end{equation*}
If we determine the Euclidean distance of an embedded point to $\mathfrak{u}_1$ or $\mathfrak{u}_2$, the result  is a complex number and depends on $x_0$ respectively $y_0$. Hence, these elements do not represent points.

\subsection{Geometric entities}
\noindent To calculate the preimage $\eta^{-1}$ of $\mathfrak{p}\in{\bigwedge}^1V$ representing an embedded point, {\it i.e.}, an algebra element fulfilling \eqref{EQ22} and \eqref{EQ23}, we determine its IPNS ($\mathds{NI}(\mathfrak{p})$) with respect to the embedding. This is called \emph{geometric inner product null space} and dual \emph{geometric outer product null space}, see \cite{perwass:geometricalgebra}.
\begin{definition}
The \emph{geometric inner product null space} (GIPNS) and dual the \emph{geometric outer product null space} (GOPNS) of a $k$-blade $\mathfrak{A}\in{\bigwedge}^kV$ is defined by
\begin{align*}
\mathds{NI}_G(\mathfrak{A})&:=\left\lbrace (x,y)^\mathrm{T}\in\mathds{R}^2:\epsilon(x,y) \cdot \mathfrak{A}=0\right\rbrace, \\
\mathds{NO}_G(\mathfrak{A})&:=\left\lbrace (x,y)^\mathrm{T}\in\mathds{R}^2:\epsilon(x,y) \wedge \mathfrak{A}=0\right\rbrace .
\end{align*}
\end{definition}

\begin{remark}
When dealing with an algebra element and the corresponding geometric entity, we will explicitly mention what null space is meant. For example we will talk about inner product conics. This means the inner product null space defines a conic in $\mathds{R}^2$.
\end{remark}
\noindent
Before we start the examination of geometric objects occurring in this model we define special $5$-blades that are necessary to change from inner product to outer product null spaces and vice versa.
\begin{definition}\label{DEF8}
On the one hand the $5$-blade
\[\mathfrak{I}=e_2 \wedge e_5 \wedge e_1 \wedge e_4 \wedge (e_3+e_6)\]
maps outer product null spaces to inner product null spaces
\begin{equation*}
\mathfrak{I}:{\bigwedge}^i V\rightarrow {\bigwedge}^{k-i} V,\quad\quad
{\bigwedge}^i V \ni \mathfrak{v}\mapsto \mathfrak{v}\cdot \mathfrak{I} \in{\bigwedge}^{k-i} V,
\end{equation*}
with $i\in\left\lbrace 1,\ldots,4 \right\rbrace $ and $k=5$ for the planar quadric geometric algebra. On the other hand
\[\mathfrak{I}^*:=e_2\wedge e_5 \wedge e_3\wedge e_6 \wedge(e_1+e_4)\]
maps dual elements to normal elements respectively inner product null spaces to outer product null spaces. There is no difference in left- or right multiplication with these $5$-blades. The result differs by the factor $-1$ and describes the same geometric entity.
\end{definition}
\noindent
With Def. \ref{DEF8} we get (see \cite{zamora:geometricalgebra})
\begin{equation*}
\mathds{NI}_G(\mathfrak{A})=\mathds{NO}_G(\mathfrak{A}\cdot \mathfrak{I}),\quad\quad
\mathds{NO}_G(\mathfrak{A})=\mathds{NI}_G(\mathfrak{A}\cdot \mathfrak{I}^*).
\end{equation*}
Note that dualization is realized with the inner product. Now we take a look at the inner product null space of grade-1 elements that are not embedded points. Therefore, at least one of the conditions \eqref{EQ22} or \eqref{EQ23} is not satisfied. Let
$\mathfrak{c}=-2a_1e_1+2a_2e_2-a_3e_3-2a_4e_4+2a_5e_5-a_6e_6$
be a general $1$-blade. The GIPNS results
\begin{align*}
\mathds{NI}_G(\mathfrak{c})&=\left\lbrace (x,y)^\mathrm{T} \in\mathds{R}^2 \mid\eta(x,y)\cdot \mathfrak{c}=0 \right\rbrace\\
&= \left\lbrace (x,y)^\mathrm{T} \in\mathds{R}^2 \mid a_1x^2+2a_2x+a_3+a_4y^2+2a_5y+a_6=0\right\rbrace .
\end{align*}
The GIPNS is a conic in principal position, because there is no term containing $xy$. Any conic has a coefficient matrix and is given by
\[\begin{pmatrix}
1 &x &y
\end{pmatrix}\begin{pmatrix}
a_{11}&a_{12}&a_{13}\\
a_{12}&a_{22}&a_{23}\\
a_{13}&a_{23}&a_{33}\\
\end{pmatrix}\begin{pmatrix}
1\\x\\y
\end{pmatrix}=0.\]
Therefore, we can define a bijection $\chi$ between those symmetric matrices which represent conics in principal position and $1$-blades by 
\begin{equation}\label{EQ37}
\begin{pmatrix}
a_0&a_2&a_5\\
a_2&a_1&0\\
a_5&0&a_4
\end{pmatrix}\mapsto 2a_1e_1-2a_2e_2+a_3e_3+2a_4e_4-2a_5e_5+a_6e_6.
\end{equation}
For the bijection \eqref{EQ37} we assume that $a_3:=\frac{1}{2}a_0$ and $a_6:=\frac{1}{2}a_0$. It would be sufficient to demand that $a_3+a_6=a_0$ to result in the same conic, because the constant value is equal to $a_3+a_6$. This does not change the GIPNS of the conic.
With Eq. \eqref{EQ37} embedded points can be interpreted as circles whose radii are equal to zero.\\

\noindent After dualization an inner product conic $\mathfrak{c}$ becomes an outer product conic $\hat{\mathfrak{c}}$ that is a four-blade and can be generated by the outer product of four embedded points. These four points lie on the conic because
\[\mathfrak{p}_i\in\mathds{NO}_G(\mathfrak{p}_1 \wedge \mathfrak{p}_2\wedge \mathfrak{p}_3 \wedge \mathfrak{p}_4), \mbox{ for $i=1,\ldots,4$}.\]
The natural question that arises is: Is there a way to classify conics in this model? For this purpose we study the incidence of the conics with the three additional ideal points. If a conic contains both ideal elements $e_3$ and $e_6$ it automatically contains also $e_3+e_6$. First we look at the entities $\mathfrak{a}\in{\bigwedge}^1 V$ that contain the ideal points $e_3,e_6$, and therefore, also $e_3+e_6$. Thus, we get the conditions
\begin{equation*}
\mathfrak{a}\cdot e_3=2a_1=0,\quad \mathfrak{a}\cdot e_6=2a_4=0.
\end{equation*}
Hence, $a_1$ and $a_4$ have to vanish. The corresponding algebra element has the form
\[\mathfrak{l}=2a_2e_2-a_3e_3+2a_5e_5-a_6e_6.\]
Its GIPNS is calculated by
\[\mathds{NI}_G(\mathfrak{l})=\left\lbrace (x,y)^T\in\mathds{R}^2 \mid 2a_2x+2a_5y+a_3+a_6=0 \right\rbrace.\]
Clearly, this entity describes an inner product line and every line passes through $e_3, e_6$, and $e_3+e_6$. An algebra element that contains just $e_3$ or $e_6$ is a parabola whose axis are parallel to the $x$-axis or the $y$-axis. An element that contains $e_3+e_6$, but neither $e_3$ nor $e_6$, is given by the condition $\mathfrak{a}\cdot (e_3+e_6)=2a_1+2a
_4=0$. This means $a_1=-a_4$ and the corresponding conic is an equilateral hyperbola, {\it i.e.}, the asymptotes enclose an angle of $90^\circ$. All other conics in principal axes position can be obtained by the wedge product of four embedded points or by using the bijection between conics and the algebra elements, cf. Eq. \eqref{EQ37}.
\begin{remark}
Note, that this description of conics also contains conics without real points.
\end{remark}
\noindent
In the most general case two-blades correspond to inner product point quadruples. This can be seen from
\begin{equation*}
\mathds{NI}_G(\mathfrak{a}\wedge \mathfrak{b})=\mathds{NI}_G(\mathfrak{a})\cap \mathds{NI}_G(\mathfrak{b}),
\end{equation*}
see \cite{perwass:geometricalgebra}. Therefore, two-blades represent all points belonging to both conics that are represented by the vectors. If two non-degenerate conics do not intersect, the corresponding two-blade represents a complex inner product point quadruple. Furthermore, we can see by dualization, that three-blades belong to outer product point quadruples. For two inner product lines $\mathfrak{l}_1,\mathfrak{l}_2$ the corresponding two-blade $\mathfrak{l}_1\wedge \mathfrak{l}_2$ represents an inner product pair of points, where one of the points is their affine intersection point and the other the point at infinity.
\begin{example}\label{EX1}
Let us generate a conic through four points. Therefore, we choose four points and embed them via \eqref{EQ21}.
\begin{align*}
P_1=(-1,0)^\mathrm{T}&\to\mathfrak{p}_1=e_1-e_2+\frac{1}{2}e_3+e_4,\\
P_2=(1,0)^\mathrm{T}\,\,\,\,\,&\to\mathfrak{p}_2=e_1+e_2+\frac{1}{2}e_3+e_4,\\
P_3=(0,-1)^\mathrm{T}&\to\mathfrak{p}_3=e_1+e_4-e_5+\frac{1}{2}e_6,\\ P_4=(-1,0)^\mathrm{T}&\to\mathfrak{p}_4=e_1+e_4+e_5+\frac{1}{2}e_6.\\
\end{align*}
The corresponding inner product representation is calculated by 
\[\mathfrak{c}=\mathfrak{I} \cdot (\mathfrak{p}_1 \wedge \mathfrak{p}_2 \wedge \mathfrak{p}_3 \wedge \mathfrak{p}_4)=4e_1-e_3+4e_4-e_6.\]
The GIPNS is given by
\[\mathds{NI}_G(\mathfrak{c})=\left\lbrace (x,y)^T\in\mathds{R}^2\mid -x^2+1-y^2=0\right\rbrace .\]
With the bijection \eqref{EQ37} we see easily that $\mathfrak{c}$ is the image of the conic given by the diagonal matrix diag$(1,-1,-1)$, {\it i.e.}, the unit circle centered at the origin.
\end{example}

\subsection{Transformations}
In this section we discuss transformations in this algebra. The Clifford algebra $\clifford_{(4,2,0)}$ corresponds to the quadratic space $\mathds{R}^{(4,2)}$, and therefore, the sandwich action of vectors represents reflections in hyperplanes in this space. It is clear that the transformations induce transformations that are not linear in the base space $\mathds{R}^2$ because the embedding $\eta$ is quadratic. From the last section we know that the geometric entity corresponding to vectors are axis aligned conics. Furthermore, a transformation acts via the sandwich operator and results again in a $k$-blade when applied to a $k$-blade $k=1,\ldots,4$. We begin with an example:
\begin{example}\label{EX2}
Let $\mathfrak{c}$ be the circle from Ex. \ref{EX1}
\[\mathfrak{c}=4e_1-e_3+4e_4-e_6\]
and let $\mathfrak{p}=e_1+e_2+\frac{1}{2}e_3+e_4+2e_5+2e_6$ be $\eta(1,2)^\mathrm{T}$. Applying the sandwich operator to $\mathfrak{p}$ results in
\[\mathfrak{p}'=\alpha(\mathfrak{c})\mathfrak{p}\mathfrak{c}^{-1}=5e_1+e_2-\frac{1}{2}e_3+5e_4+2e_5+e_6.\]
Now we check if this entity is still an embedded point. Therefore, the conditions \eqref{EQ22} and \eqref{EQ23} have to be checked. Condition \eqref{EQ22} is satisfied, but condition \eqref{EQ23} not
\[{\mathfrak{p}'}_x^2=6,\quad {\mathfrak{p}'}_y^2=-6.\]
Hence, $\mathfrak{p}'$ cannot be interpreted as embedded point. The GIPNS of $\mathfrak{p}'$ is given by
\[\mathds{NI}_G(\mathfrak{p}')=\left\lbrace (x,y)^\mathrm{T}\in\mathds{R}^2 \mid -\frac{5}{2}x^2+x-\frac{5}{2}y^2+2y-\frac{1}{2} =0\right\rbrace. \]
This represents a pair of complex lines intersecting in the real point $(\frac{1}{5},\frac{2}{5})^\mathrm{T}$.
\end{example}
\noindent
Ex. \ref{EX2} shows that in general a conic is mapped to another conic. We can not map a circle of radius zero to a circle of radius zero and define a mapping for points on this way. Thus, we study the action of the transformations applied to vectors that correspond to conics.
\begin{theorem}
A conic represented by the vector $\mathfrak{a}\in\bigwedge^1 V$ is pointwise fixed under the transformation induced by itself. Furthermore, these transformation are involutions.
\end{theorem}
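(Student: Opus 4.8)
The plan is to work directly with the sandwich operator $\mathfrak{v}\mapsto\alpha(\mathfrak{a})\mathfrak{v}\mathfrak{a}^{-1}$ and to exploit that $\mathfrak{a}$ is a $1$-blade. First I would record the two facts that make the computation short. Since $\mathfrak{a}\in\bigwedge^1 V$ is odd, the main involution gives $\alpha(\mathfrak{a})=-\mathfrak{a}$, so the transformation induced by $\mathfrak{a}$ is $\sigma_\mathfrak{a}(\mathfrak{v}):=\alpha(\mathfrak{a})\mathfrak{v}\mathfrak{a}^{-1}=-\mathfrak{a}\mathfrak{v}\mathfrak{a}^{-1}$. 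Moreover, for a $1$-blade the square $\mathfrak{a}^2=\mathfrak{a}\cdot\mathfrak{a}$ is a scalar, and $\mathfrak{a}^{-1}=\mathfrak{a}^\ast/N(\mathfrak{a})=\mathfrak{a}/\mathfrak{a}^2$; invertibility is precisely the requirement $\mathfrak{a}^2\neq0$ that places $\mathfrak{a}$ in the Clifford group, so that $\sigma_\mathfrak{a}$ maps $\bigwedge^1 V$ to itself and is a genuine transformation of conics.

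For the pointwise-fixed claim, let $P$ be a point of the conic, that is $\mathfrak{p}=\eta(P)$ with $\mathfrak{p}\cdot\mathfrak{a}=0$ by the definition of the GIPNS. The key observation is that a vanishing inner product forces $\mathfrak{a}$ and $\mathfrak{p}$ to anticommute: from \eqref{EQ10} and the symmetry $\mathfrak{a}\cdot\mathfrak{p}=\mathfrak{p}\cdot\mathfrak{a}$ we get $0=\mathfrak{a}\cdot\mathfrak{p}=\tfrac12(\mathfrak{a}\mathfrak{p}+\mathfrak{p}\mathfrak{a})$, hence $\mathfrak{a}\mathfrak{p}=-\mathfrak{p}\mathfrak{a}$. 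Substituting this into the sandwich yields $\sigma_\mathfrak{a}(\mathfrak{p})=-\mathfrak{a}\mathfrak{p}\mathfrak{a}^{-1}=\mathfrak{p}\,\mathfrak{a}\mathfrak{a}^{-1}=\mathfrak{p}$, so the embedded point is fixed exactly, not merely up to a scalar factor. Since this holds for every $P$ on the conic, the conic is pointwise fixed, which is the first assertion.

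For the involution claim, I would apply $\sigma_\mathfrak{a}$ twice. As $\mathfrak{a}$ lies in the Clifford group, $\sigma_\mathfrak{a}(\mathfrak{v})$ is again a $1$-blade, so a second application is legitimate and gives $\sigma_\mathfrak{a}^2(\mathfrak{v})=-\mathfrak{a}\bigl(-\mathfrak{a}\mathfrak{v}\mathfrak{a}^{-1}\bigr)\mathfrak{a}^{-1}=\mathfrak{a}^2\,\mathfrak{v}\,\mathfrak{a}^{-2}$. Because $\mathfrak{a}^2$ is a scalar it commutes past $\mathfrak{v}$ and cancels against $\mathfrak{a}^{-2}$, leaving $\sigma_\mathfrak{a}^2(\mathfrak{v})=\mathfrak{v}$; indeed the identity $\sigma_\mathfrak{a}^2=\mathrm{id}$ holds verbatim on the whole algebra. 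This establishes that $\sigma_\mathfrak{a}$ is an involution.

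I do not expect a serious obstacle: the statement reduces to a two-line Clifford-algebra computation once the anticommutation trick is noticed. The only points demanding care are bookkeeping ones, namely confirming $\alpha(\mathfrak{a})=-\mathfrak{a}$ for a $1$-blade, using the symmetry of the inner product so that the GIPNS condition really delivers anticommutation, and keeping $\mathfrak{a}^2\neq0$ so that $\mathfrak{a}^{-1}$ exists and $\sigma_\mathfrak{a}$ is a grade-preserving element of the action. Degenerate vectors with $\mathfrak{a}^2=0$ (which do not represent invertible conics) fall outside the hypothesis and need not be treated.
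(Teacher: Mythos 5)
Your proof is correct, and for the pointwise-fixedness claim it takes a genuinely different and in fact more direct route than the paper. The paper first checks that $\alpha(\mathfrak{a})\mathfrak{a}\mathfrak{a}^{-1}=-\mathfrak{a}$ (so the conic is invariant as a set, up to a homogeneous factor), and then establishes pointwise fixedness indirectly: it intersects the conic with the pencil of lines through the origin, forms the $2$-blades $\mathfrak{l}(x,y)\wedge\mathfrak{a}$ representing the resulting point pairs, and verifies by explicit coordinate computation that each such $2$-blade is reproduced by the sandwich operator. Your argument instead goes straight to the source: a point $P$ of the conic satisfies $\eta(P)\cdot\mathfrak{a}=0$ by the very definition of the GIPNS, the vanishing inner product forces $\mathfrak{a}\eta(P)=-\eta(P)\mathfrak{a}$, and hence $\alpha(\mathfrak{a})\eta(P)\mathfrak{a}^{-1}=\eta(P)$ exactly. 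This buys you two things the paper's computation does not make explicit: it fixes each embedded point individually rather than only each intersection \emph{pair} as a $2$-blade (a pair being fixed as a blade does not by itself rule out the two points being exchanged), and it avoids any coordinate computation, working for arbitrary dimension verbatim. It also sidesteps the worry raised by the paper's Example 2, since for points \emph{on} the conic the image of $\eta(P)$ is literally $\eta(P)$ and so remains an embedded point. Your treatment of the involution property coincides with the paper's: $\sigma_\mathfrak{a}^2(\mathfrak{v})=\mathfrak{a}^2\mathfrak{v}\mathfrak{a}^{-2}=\mathfrak{v}$ because $\mathfrak{a}^2$ is a scalar. The only caveats you already flag correctly: the hypothesis $\mathfrak{a}^2\neq 0$ is needed for $\mathfrak{a}^{-1}$ to exist, and the symmetry of the inner product on $1$-blades is what converts the GIPNS condition into anticommutation.
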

\begin{proof}
First, we show that the conic corresponding to the transformation is fixed pointwise. Therefore, we look at the action of a general vector
\[\mathfrak{a}= -2a_1e_1+2a_2e_2-a_3e_3-2a_4e_4+2a_5e_5-a_6e_6\]
to itself, and find
\[\alpha(\mathfrak{a})\mathfrak{a}\mathfrak{a}^{-1}=\alpha(\mathfrak{a})=-\mathfrak{a}.\]
Multiplication with a homogeneous factor does not change the GIPNS. Thus, the result is the conic represented by $\mathfrak{a}$ again. To show that the points of the conic $\mathfrak{a}$ are fixed under the transformation induced by $\mathfrak{a}$, we examine the action of $\mathfrak{a}$ on the intersection points of the conic $\mathfrak{a}$ with all lines containing the point $(0,0)$. These lines are given by
\begin{align*}
\mathfrak{l}(x,y)&=\mathfrak{I}\cdot\left( \eta(0,0)\wedge \eta(x,y)\wedge e_3 \wedge e_6\right)\\
&=\mathfrak{I}\cdot\left( (e_1+e_4)\wedge (e_1+xe_2+\frac{1}{2}x_1^2e_3+e_4+ye_5+\frac{1}{2}y^2e_6)\wedge e_3 \wedge e_6\right)\\
&=2ye_2-2xe_5.
\end{align*}
The intersection of all these lines with the conic $\mathfrak{a}$ is represented by \begin{align*}
\mathfrak{l}(x,y)\wedge \mathfrak{a}=&-2a_3ye_{23}+4a_1ye_{12}-4a_4ye_{24}-4a_1xe_{15}+4(a_5y+a_2x)e_{25}\\
&-2a_3xe_{35}-2a_6ye_{26}-4a_4xe_{45}+2a_6xe_{56}.
\end{align*}
This two-blade represents the pair of common points of the conic and $\mathfrak{l}(x,y)$. The application of the transformation induced by $\mathfrak{a}$ to $\mathfrak{l}(x,y)\wedge \mathfrak{a}$ results in
\begin{align*}
\alpha(\mathfrak{a})(\mathfrak{l}(x,y)\wedge \mathfrak{a})\mathfrak{a}^{-1}&=-2a_3ye_{23}\!+\!4a_1ye_{12}\!-\!4a_4ye_{24}\!-\!4a_1xe_{15}\\
&+4(a_5y+a_2x)e_{25}\!-\!2a_3xe_{35}\!-\!2a_6ye_{26}\!-\!4a_4xe_{45}\!+\!2a_6xe_{56}.
\end{align*}
This shows, that all pairs of common points of the pencil of lines with the conic are fixed, and therefore, the whole conic is fixed pointwise. To see that the transformation is an involution we have to apply it twice to an arbitrary $k$-blade $\mathfrak{B}$
\[\alpha(\mathfrak{a})\alpha(\mathfrak{a})\mathfrak{B}\mathfrak{a}^{-1}\mathfrak{a}^{-1}=\alpha(\mathfrak{a}^2)\mathfrak{B}{(\mathfrak{a}^2)}^{-1}=\mathfrak{B}.\]
The last equality follows because $\mathfrak{a}^2$ is a real number.
\end{proof}
\noindent
Due to the fact that these transformations are represented as reflections with respect to hyperplanes in $\mathds{R}^{(4,2)}$, they are involutions and fix the corresponding hyperplane pointwise. This is the reason why we interpret these transformations as reflections or inversions with respect to conics. Furthermore, the whole group of transformations is generated by the action of vectors. Note that the image of a conic in principal position is always a conic in principal position in this model and that intersection point quadruples of a conic with the reflection conic stay fixed, no matter if the intersection points are real or complex.
\begin{remark}
The group of conformal transformations of a quadratic space $\mathds{R}^{(p,q)}$ can be described as the Pin group of a Clifford algebra $\clifford_{(p+1,q+1,0)}$, see \cite{porteous:cliffordalgebrasandtheclassicalgroups}. Therefore, the group of conformal transformations of the \emph{Minkowski space} $\mathds{R}^{(3,1)}$ is isomorphic to the group of inversions with respect to conics in principal position except for a translation. Especially for the planar quadric geometric algebra Q2GA we have the signature $(p,q,r)=(4,2,0)$ which is identical to the signature of the homogeneous model where we choose Lie's quadric as metric quadric. Thus, the Pin group of Q2GA is isomorphic to the group of Lie transformations.
\end{remark}
\subsection{Effect on Lines and Points}
In Ex. \ref{EX2} we have seen that a circle with radius zero is mapped to a pair of complex conjugate lines intersecting in a real point. Therefore, we have to search for a better description of points in this model. One way to describe points as two-blades is to examine the intersection of two lines. If we take just the affine point of intersection, we can define an embedding of the affine plane as points of intersection of pairs of lines. Therefore, we take two lines through a given point $(x,y)^\mathrm{T}\in\mathds{R}^2$. We define this point to be the point of the line parallel to the $x$-axis and the line parallel to the $y$-axis.
\begin{align}
\mathfrak{p}&=(\mathfrak{I}\cdot (\eta(x,y)\wedge\eta(x,0)\wedge e_3\wedge e_6))\wedge (\mathfrak{I}\cdot (\eta(x,y)\wedge\eta(0,y)\wedge e_3\wedge e_6))\nonumber\\
&= -y^2xe_{23}-2yx e_{25}-yx^2 e_{35}+yx^2 e_{56}-y^2x e_{26}\nonumber\\
&= 2 e_{25}+x( e_{35}-e_{56})+y(e_{23}+ e_{26}).
\label{EQ41}
\end{align}
Note that this element represents a pair of points, since lines meet also in the point $\infty$. We parametrize affine lines as the sets of all lines passing through two points $p_1=(x_1,y_1)^\mathrm{T}$ and $p_2=(x_2,y_2)^\mathrm{T}$. The inner product line is derived by
\begin{align}
\mathfrak{l}(p_1,p_2)&=\mathfrak{I}\cdot\left( \eta(p_1)\wedge \eta(p_2)\wedge e_3\wedge e_6\right)\nonumber\\
&= -2(y_1-y_2) e_2+(x_1y_2-y_1x_2) e_3\nonumber\\
&+2(x_1-x_2) e_5+(x_1y_2-y_1x_2)e_6.\label{EQ42}
\end{align}
The GIPNS of this line is determined by
\[\mathds{NI}_G(\mathfrak{l}(p_1,p_2))=\left\lbrace(x,y)\in\mathds{R}^2\mid (x_1-x_2)y+(y_2-y_1)x+(y_1x_2-x_1y_2)=0 \right\rbrace.\]
\begin{theorem}
The image of a line under an inversion with respect to a conic represented by the non-null vector $\mathfrak{a}\in{\bigwedge}^1 \clifford_{(4,2,0)}$ is a conic. Moreover, for non-degenerate conics this conic is the image of $\mathfrak{a}$ under an affine transformation, {\it i.e.}, translation and scalar multiplication.
\end{theorem}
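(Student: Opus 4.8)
The plan is to compute the transformed line-vector in closed form and then read off its geometric inner product null space. First I would strip the two involutions from the sandwich. Since $\mathfrak{a}\in\bigwedge^1 V$ is a $1$-blade, $\alpha(\mathfrak{a})=-\mathfrak{a}$ and $\mathfrak{a}^\ast=-\mathfrak{a}$, so with $N(\mathfrak{a})=\mathfrak{a}\mathfrak{a}^\ast=-\mathfrak{a}^2$ (a nonzero scalar, as $\mathfrak{a}$ is non-null) one has $\mathfrak{a}^{-1}=-\mathfrak{a}/\mathfrak{a}^2$, and hence for any line-vector $\mathfrak{l}$,
\[
\mathfrak{l}'=\alpha(\mathfrak{a})\,\mathfrak{l}\,\mathfrak{a}^{-1}=\frac{1}{\mathfrak{a}^2}\,\mathfrak{a}\mathfrak{l}\mathfrak{a}.
\]
Next I would apply the elementary identity $\mathfrak{a}\mathfrak{l}\mathfrak{a}=2(\mathfrak{a}\cdot\mathfrak{l})\mathfrak{a}-\mathfrak{a}^2\mathfrak{l}$, which follows at once from $\mathfrak{a}\mathfrak{l}+\mathfrak{l}\mathfrak{a}=2\,\mathfrak{a}\cdot\mathfrak{l}$ together with the fact that $\mathfrak{a}^2$ is a scalar. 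This gives
\[
\mathfrak{l}'=\lambda\,\mathfrak{a}-\mathfrak{l},\qquad \lambda:=\frac{2(\mathfrak{a}\cdot\mathfrak{l})}{\mathfrak{a}^2}.
\]
Since $\mathfrak{l}'$ is a linear combination of the two $1$-blades $\mathfrak{a}$ and $\mathfrak{l}$, it is itself a $1$-blade, and by the discussion preceding Eq.~\eqref{EQ37} the GIPNS of a $1$-blade is a conic in principal position; this settles the first assertion.

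For the second assertion I would pass to coordinates through the embedding. Taking the inner product of $\mathfrak{l}'$ with an arbitrary embedded point $\eta(x,y)$, and using that $\eta(x,y)\cdot\mathfrak{a}$ equals the defining polynomial $Q(x,y)$ of the reflection conic while $\eta(x,y)\cdot\mathfrak{l}$ equals the affine-linear defining polynomial $L(x,y)$ of the line, the image reads
\[
\mathds{NI}_G(\mathfrak{l}')=\left\lbrace (x,y)^\mathrm{T}\in\mathds{R}^2 : \lambda\,Q(x,y)-L(x,y)=0 \right\rbrace.
\]
The decisive feature is that $\mathfrak{l}$ represents a \emph{line}: it has no $e_1$- or $e_4$-part, so $L$ carries no quadratic terms, and consequently the entire quadratic part of $\lambda Q-L$ equals $\lambda$ times the quadratic part of $Q$. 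Thus, whenever $\lambda\neq0$, the image conic and the reflection conic $\mathfrak{a}$ have proportional quadratic parts and can differ only in their linear and constant coefficients.

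Finally I would convert ``equal quadratic part'' into the claimed affine equivalence. For a non-degenerate central conic $\mathfrak{a}$ both pure-square coefficients are nonzero, so completing the square in $\lambda Q-L$ and in $Q$ puts each into the form $a_1(x-c_1)^2+a_4(y-c_2)^2=\mathrm{const}$ with the \emph{same} $a_1,a_4$. The two conics therefore have identical axis directions and identical ratio of semi-axes and differ only in the position of their centre and in the value of the constant, which is exactly the freedom of a translation composed with a uniform scaling. Hence $\mathds{NI}_G(\mathfrak{l}')$ is the image of the conic $\mathfrak{a}$ under such an affine map.

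I expect the genuine difficulties to lie not in the algebra but in this last step and in the exceptional cases. One must argue cleanly that preservation of the quadratic part is equivalent to being a translate-and-dilate of $\mathfrak{a}$, and one must flag the degenerate case $\lambda=0$, i.e. $\mathfrak{a}\cdot\mathfrak{l}=0$, in which $\mathfrak{l}'\propto\mathfrak{l}$ and the line is fixed; there the image is the line itself, so the ``moreover'' genuinely requires $\mathfrak{a}\cdot\mathfrak{l}\neq0$. A further point to address is the sign of the resulting scale factor, which decides whether the dilation is real or passes to a conjugate/imaginary conic; since this model explicitly admits conics without real points, I would absorb that sign into the statement rather than treat it as a separate case.
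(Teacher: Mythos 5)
Your argument is correct, but it takes a genuinely different route from the paper. The paper proves the theorem by brute force: it reduces to a central conic $\mathfrak{a}=2a_1e_1+\frac{1}{2}a_0e_3+2a_4e_4+\frac{1}{2}a_0e_6$ (justifying the reduction by realizing translations as products of two line reflections), applies the sandwich operator to the two-point parametrization \eqref{EQ42} of a line, reads off the $3\times 3$ coefficient matrix $\mathrm{N}(p_1,p_2)$ of the image, and then diagonalizes it with an explicit projective translation $\mathrm{T}$ to obtain $\mathrm{diag}(1,ka_1,ka_4)$. Your identity $\mathfrak{a}\mathfrak{l}\mathfrak{a}=2(\mathfrak{a}\cdot\mathfrak{l})\mathfrak{a}-\mathfrak{a}^2\mathfrak{l}$ replaces all of this with the observation that the image lies in the pencil spanned by $\mathfrak{a}$ and $\mathfrak{l}$, so that its defining polynomial is $\lambda Q-L$ and the quadratic part of $Q$ is preserved up to the scalar $\lambda$; completing the square then gives the translation-plus-scaling directly, without first centering $\mathfrak{a}$ at the origin. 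This buys conceptual clarity and makes the exceptional cases visible that the paper only observes a posteriori from its matrix: your $\lambda=0$ is exactly the paper's ``lines through the center are fixed, but not pointwise'' (in \eqref{EQ42} one has $\lambda\propto x_1y_2-y_1x_2$), and your remark about the sign of the scale factor (real versus conjugate/imaginary image conic) is a point the paper silently absorbs into its indefinite constant $k$. Two small things to tidy: with the paper's conventions $\mathfrak{a}^{-1}=\mathfrak{a}^\ast/N(\mathfrak{a})=+\mathfrak{a}/\mathfrak{a}^2$, so your $\mathfrak{l}'$ differs from the paper's by an overall sign --- harmless projectively, but worth fixing; and like the paper, your completed-square step assumes $a_1,a_4\neq 0$, so the ``moreover'' is established only for central non-degenerate conics, not for parabolas.
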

\begin{proof}
To show this we concentrate on conics with no terms in $x$ and $y$. We can do this because we are just interested in the type of the image conic. Furthermore, we can perform translations by two reflections in parallel lines, and thus, we can carry over the results from the principal position to an arbitrary position. Furthermore, we just show this theorem for non degenerate conics. A conic with no terms in $x$ or $y$ is given by
\[\mathfrak{a}=2a_1e_1+\frac{1}{2}a_0e_3+2a_4 e_4+\frac{1}{2}a_0e_6.\]
Since we are interested in real conics, the coefficients $a_0,a_1$, and $a_4$ are not allowed to have the same sign. In the sequel we assume that the conic is real. Now we can look at the matrix of the conic
\[\mathrm{M}=\begin{pmatrix}
1&0&\\0&\frac{a_1}{a_0}&0\\0&0&\frac{a_4}{a_0}
\end{pmatrix}.\]
The image of the set of lines \eqref{EQ42} is calculated by
\begin{align*}
\alpha(\mathfrak{a})\mathfrak{l}(p_1,p_2)\mathfrak{a}^{-1}&=-\frac{4a_1(x_1y_2-y_1x_2)}{a0}e_1-2(y_1-y_2)e_2\\
&-\frac{4a_4(x_1y_2-y_1x_2)}{a_0}e_4+2(x_1-x_2)e_5.
\end{align*}
The coefficient matrix of the corresponding conic is given by
\[\mathrm{N}(p_1,p_2)=\frac{1}{a_0}\begin{pmatrix}
0& a_0(y_2-y_1)&a_0(x_1-x_2)\\
a_0(y_2-y_1) & 2c_1(x_1y_2-y_1x_2)&0\\
a_0(x_1-x_2)&0&2a_4(x_1y_2-y_1x_2)
\end{pmatrix}.\]
From this representation we see immediately  that lines through the center of the conic are fixed, but not pointwise.
In order to transform this matrix to diagonal form we apply the transformation
\[p\mapsto\underbrace{\begin{pmatrix}
1&0&0\\\alpha&1&0\\\beta&0&1
\end{pmatrix}}_{=:\mathrm{T}}p,\mbox{ with $\alpha=-\frac{a_0(y1-y2)}{2a_1(x_1y_2-y_1x_2)}$ and $\beta=\frac{a_0(x1-x2)}{2a_4(x_1y_2-y_1x_2)}$}.\]
Here $p=(1,x,y)^\mathrm{T}\in\mathds{R}^3$ is a point in the projective plane. The action of this coordinate transformation applied to the coefficient matrix of the conic yields
\begin{align*}
\mathrm{N}'(p_1,p_2)&=\mathrm{T}^{-\mathrm{T}}\mathrm{N}(p_1,p_2)\mathrm{T}^{-1}\\
&=\begin{pmatrix}
-\frac{a_0^2(a_4(y_1-y_2)^2+a_1(x_1-x_2)^2)}{2a_4a_1(x_1y_2-y_1x_2))}&0&0\\
0&\frac{2 a_1(x_1y_2-y_1x_2)}{a_0}&0\\
0&0&\frac{2 a_4(x_1y_2-y_1x_2)}{a_0}
\end{pmatrix}.
\end{align*}
If we look at the affine part of the conic, we see that this results in
\[\mathrm{N}'(p_1,p_2)=\begin{pmatrix}
1&0&0\\0&ka_1&0\\0&0&ka_4
\end{pmatrix},\mbox{ with $k=-\frac{4a_1^2(x_1y_2-y_1x_2)^2a_4}{a_0^2(a_4(y_1-y_2)^2+a_1(x_1-x_2)^2)}$}.\]
Therefore, the image is identical to the conic corresponding to $\mathfrak{a}$ except for a translation $\mathrm{T}$ and a scaling $k$.
Furthermore, lines are mapped to real conics.
\end{proof}
\noindent
To illustrate this, Fig. \ref{FIG1} shows the reflections of three intersecting lines with respect to a circle, an ellipse, a parabola, and a hyperbola. The inversion conic $\mathfrak{a}_i,\,i=1,\ldots,4$ is shown in red while the pairs (line, image of the line) are presented in another colour (but the same). The inversion conics are from left to right and from up to down given by 
\begin{equation*}
\mathfrak{a}_1:x^2+y^2=1,\quad
\mathfrak{a}_2:\frac{25}{16}x^2+\frac{16}{25}y^2=1,\quad
\mathfrak{a}_3:x^2-y=1,\quad
\mathfrak{a}_4:x^2-\frac{3}{4}y^2=1.
\end{equation*}
\begin{figure}[h]
\begin{center}
\begin{overpic}[scale=0.24]{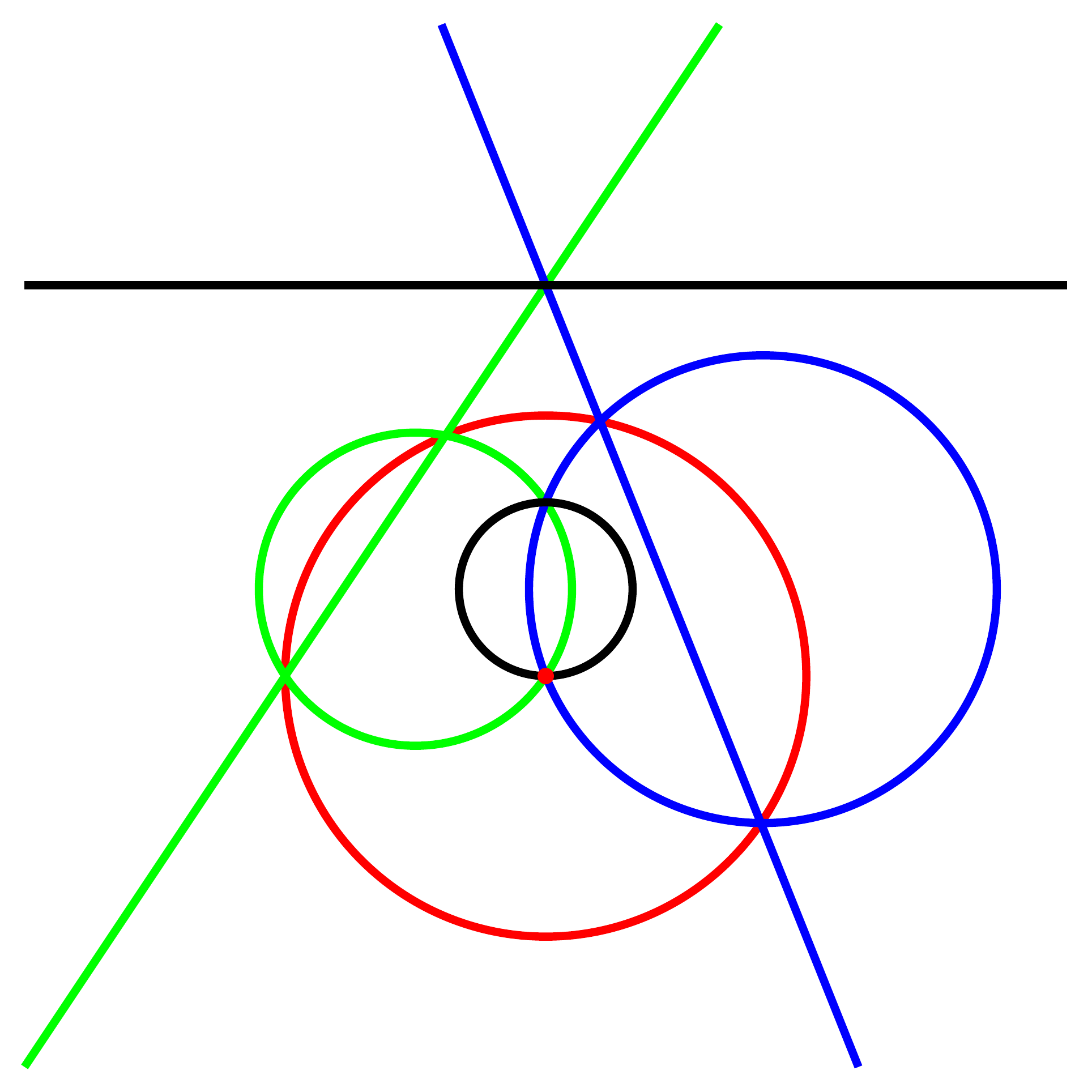}
\put(40,10){$\mathfrak{a}_1$}
\put(-2,5){$\mathfrak{l}_1$}
\put(18,50){$\mathfrak{c}_1$}
\put(17,76){$\mathfrak{l}_2$}
\put(35,45){$\mathfrak{c}_2$}
\put(70,5){$\mathfrak{l}_3$}
\put(85,63){$\mathfrak{c}_2$}
\put(47.5,33){$o$}
\end{overpic}
\hspace{4em}
\begin{overpic}[scale=0.24]{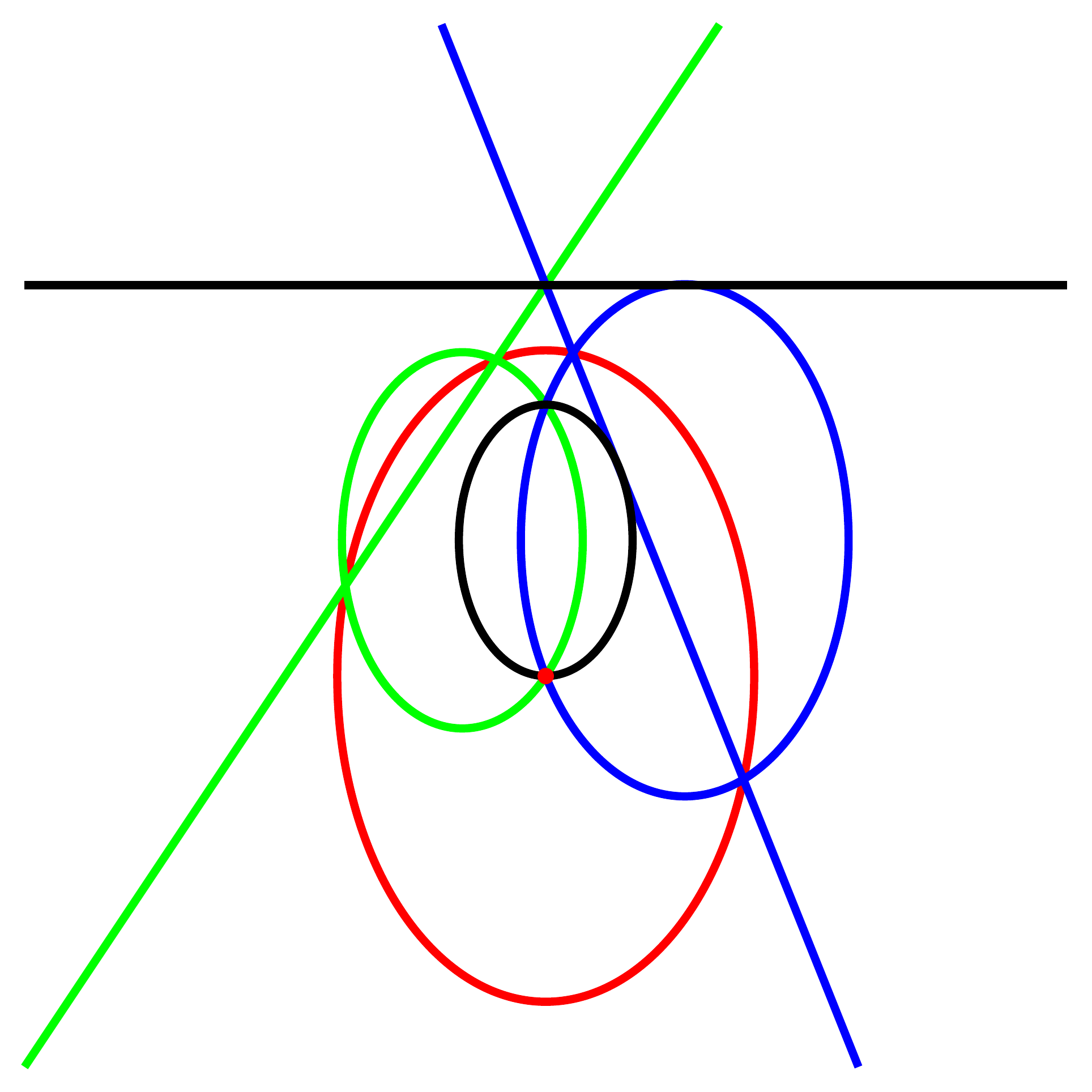}
\put(40,5){$\mathfrak{a}_2$}
\put(-2,5){$\mathfrak{l}_1$}
\put(25,55){$\mathfrak{e}_1$}
\put(17,76){$\mathfrak{l}_2$}
\put(36,42){$\mathfrak{e}_2$}
\put(70,5){$\mathfrak{l}_3$}
\put(77,62){$\mathfrak{e}_3$}
\put(47.5,33){$o$}
\end{overpic}\\
\begin{overpic}[scale=0.24]{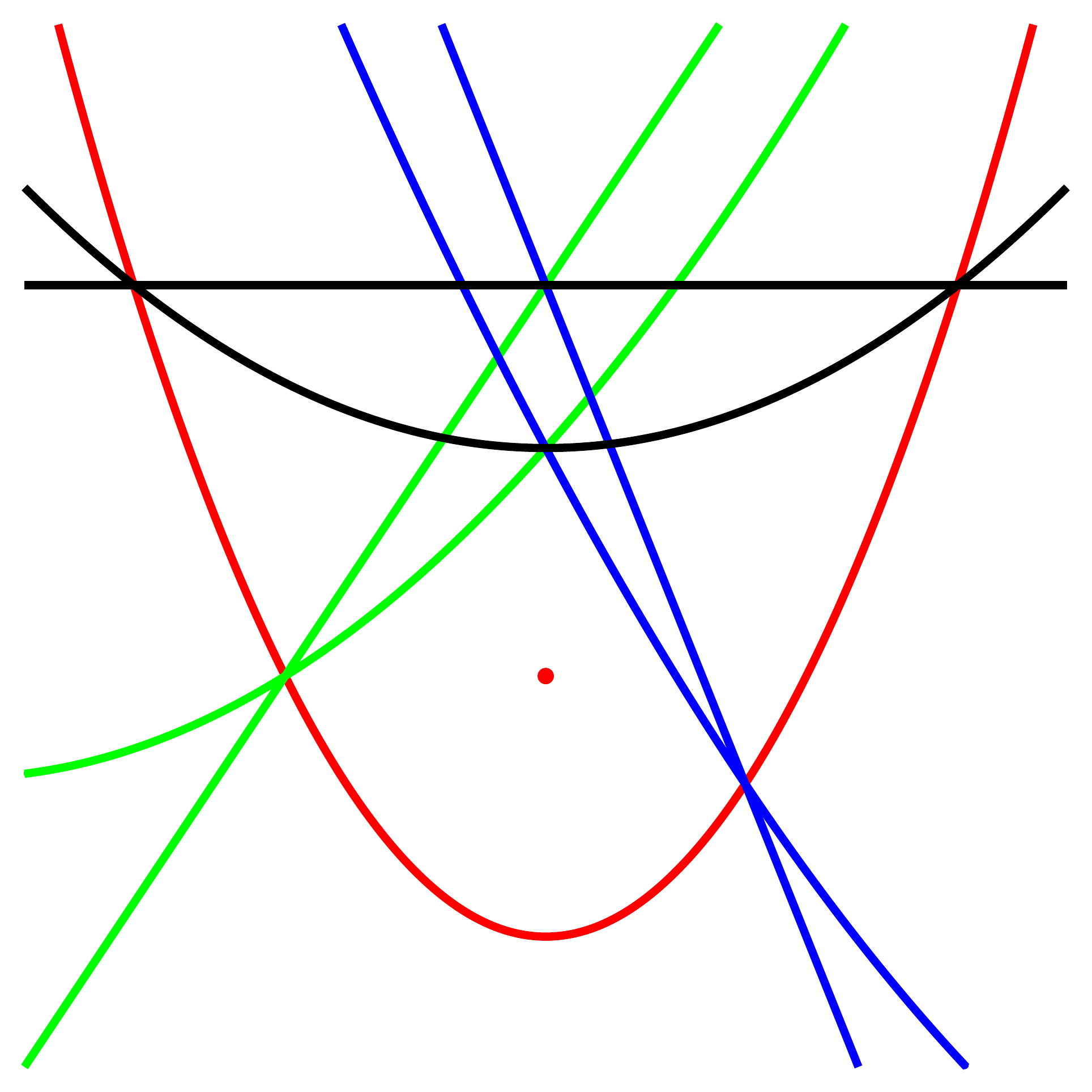}
\put(40,11){$\mathfrak{a}_3$}
\put(-2,5){$\mathfrak{l}_1$}
\put(5,33){$\mathfrak{p}_1$}
\put(17,76){$\mathfrak{l}_2$}
\put(23,60){$\mathfrak{p}_2$}
\put(70,5){$\mathfrak{l}_3$}
\put(80,13){$\mathfrak{p}_3$}
\put(47.5,33){$o$}
\end{overpic}
\hspace{4em}
\begin{overpic}[scale=0.24]{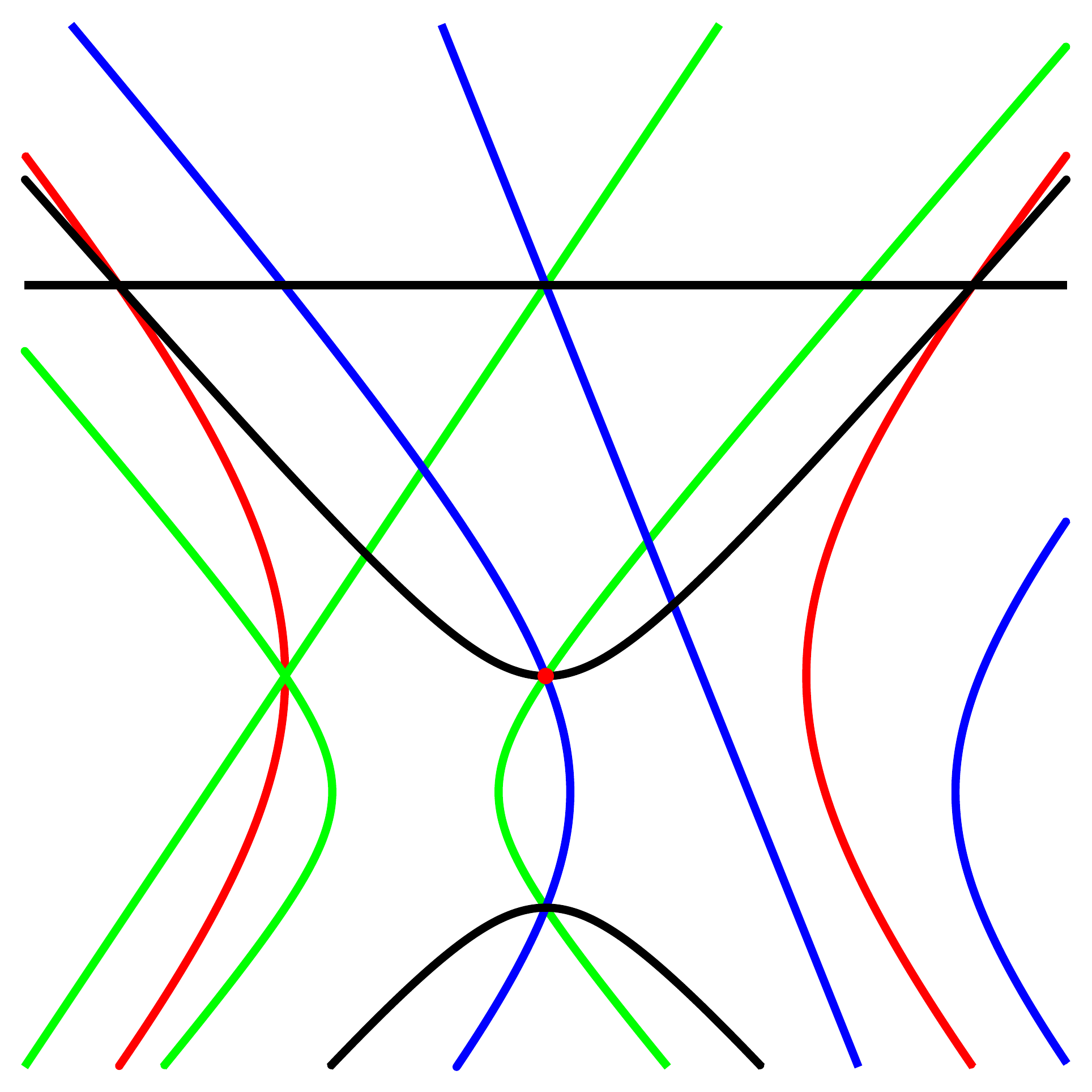}
\put(75,35){$\mathfrak{a}_4$}
\put(-2,5){$\mathfrak{l}_1$}
\put(0,60){$\mathfrak{h}_1$}
\put(17,76){$\mathfrak{l}_2$}
\put(35,37){$\mathfrak{h}_2$}
\put(70,5){$\mathfrak{l}_3$}
\put(3,90){$\mathfrak{h}_3$}
\put(47.5,33){$o$}
\end{overpic}
\caption{Inversions with respect to conics}
\label{FIG1}
\end{center}
\end{figure}

\subsection{Subgroups}
In this section we examine some subgroups that are embedded naturally in the Pin and the Spin group of Q2GA.
\paragraph{Rotation}
First, we concentrate on the group that is generated by inversions with respect to lines passing through the origin. Therefore, we study the action of these mappings applied to points embedded via \eqref{EQ41}. Two lines through the origin may be represented by
\begin{align*}
\mathfrak{l}_1&=\mathfrak{I}\cdot\left( \eta(0,0)\wedge\eta(\cos\varphi,\sin\varphi)\wedge e_3\wedge e_6 \right),\\
\mathfrak{l}_2&=\mathfrak{I}\cdot\left( \eta(0,0)\wedge\eta(\cos\psi,\sin\psi)\wedge e_3\wedge e_6 \right).
\end{align*}
Furthermore, we are interested in orientation preserving transformations, {\it i.e.}, elements from the Spin group. The composition of two reflections in $\mathfrak{l}_1$ and $\mathfrak{l}_2$ is given by their geometric product
\begin{align}
\mathfrak{l}_1\mathfrak{l}_2&=(\sin\psi\sin\varphi+\cos\psi\cos\varphi)
+(\cos\psi\sin\varphi-\sin\psi\cos\varphi)e_{25}\nonumber\\
\intertext{and with the addition theorems for sine and cosine we conclude}
\mathfrak{l}_1\mathfrak{l}_2&=\cos(\varphi-\psi)+\sin(\varphi-\psi)e_{25}\label{EQ43}.
\end{align}
\FloatBarrier
\noindent The square of $e_{25}$ is $-1$. This means that the consecutive reflection in two lines through the origin results in an algebra element that can be interpreted as complex number with norm equal to $1$. It is a well-known result that rotations in the plane can be described by normed complex numbers. Let us look at the action of such an element applied to a point that is described by \eqref{EQ41}. Let $\mathfrak{R}=\cos\varphi+\sin\varphi e_{25}$ be an element in the form of Equation \eqref{EQ43} and $\mathfrak{p}=2 e_{25}+x_0( e_{35}-e_{56})+y_0(e_{23}+ e_{26})$ a point of the form \eqref{EQ41}. We compute
\begin{align*}
\mathfrak{p}'=&\alpha(\mathfrak{R})\mathfrak{p}\mathfrak{R}^{-1}=\mathfrak{R}\mathfrak{p}\mathfrak{R}^{-1}\\
=&2 e_{25}+(-2\sin\varphi\cos\varphi x_0+2\cos(\varphi)^2 y_0-y_0)(e_{23}+e_{26})\\
+&(2\sin\varphi\cos\varphi y_0+2\cos(\varphi)^2 x_0-x_0) (e_{35}-e_{56}).
\end{align*}
The GIPNS of this entity can be computed or we can simply read the coordinates of the image point.
\begin{align*}
x&=2\cos(\varphi)^2 x_0+2\cos\varphi\sin\varphi y_0-x_0=\cos (2\varphi) x_0 +\sin (2\varphi) y_0 ,\\
y&=2\cos(\varphi)^2 y_0-2\cos\varphi\sin\varphi x_0-y_0=\cos (2\varphi) y_0 -\sin (2\varphi) x_0.
\end{align*}
Therefore, we can see that this transformation, indeed, is a rotation about the origin with the rotation angle $2\varphi$. So these elements constitute a double cover of the group $\mathrm{SO}(2)$.
\begin{remark}
From the fact that vectors are mapped to vectors by a reflection in a line it follows that axis aligned conics have to be mapped to axis aligned conics. Therefore, these mappings can not be interpreted pointwise for conics.
\end{remark}
\paragraph{Translations}
Now we aim at the group of planar Euclidean displacements $\mathrm{SE}(2)$. Therefore, we show that two consecutive reflections in parallel lines result in a translation. The group of planar Euclidean displacements can be generated as the semi-direct product of $\mathrm{SO}(2)$ and $\mathrm{T}(2)$, which describes the abelian translation group. Let $\mathfrak{l}_1$ and $\mathfrak{l}_2$ be two parallel lines and let $t_1, t_2$ be their distances from the origin. The lines are given by
\begin{align*}
\mathfrak{l}_1(\varphi,t_1)&=2\sin\varphi e_2-t_1 e_3-2\cos\varphi e_5-t_1 e_6,\\
\mathfrak{l}_2(\varphi,t_2)&=2\sin\phi e_2-t_2 e_3-2\cos\varphi e_5-t_2 e_6.
\end{align*}
The composition can be expressed with the geometric product as
\begin{align}
\mathfrak{T}(\varphi,t_1,t_2)&=\mathfrak{l}_1(\varphi,t_1)\mathfrak{l}_2(\varphi,t_2)\nonumber\\
&=2+(t_1-t_2)\sin\varphi (e_{23}+e_{26})+(t_1-t_2)\cos\varphi (e_{35}-e_{56}).\label{EQ44}
\end{align}
Applying the sandwich operator to a point $\mathfrak{p}$ results in
\begin{align*}
\alpha(\mathfrak{T})\mathfrak{p}\mathfrak{T}^{-1}&=\mathfrak{T}\mathfrak{p}\mathfrak{T}^{-1}\\
&=2e_{25}+(y_0-2t_2\cos\varphi+2t_1\cos\varphi)(e_{23}+e_{26})\\
 &+(x_0-2t_1\sin\varphi+2t_2\sin\varphi) (e_{35}-e_{56}).
\end{align*}
The image is determined by
\begin{equation*}
x=x_0-\sin\varphi(2(t_1-t_2)),\quad\quad y=y_0+\cos\varphi(2(t_1-t_2)).
\end{equation*}
Therefore, the transformation is a translation in the direction normal to the given lines $\mathfrak{l}_1$ and $\mathfrak{l}_2$.
\paragraph{The Group of planar Euclidean Displacements}
Translations and rotations about the origin generate the entire group of planar Euclidean displacements $\mathrm{SE}(2)$. Furthermore, we can now examine the group that is generated by rotations and translations as a subgroup of the Spin group. These algebra elements have the form
\FloatBarrier
\[a_0 + a_1 e_{25}+a_2 (e_{23}+e_{26})+a_3 (e_{35}-e_{56}).\]
The multiplication table of the geometric product for the generators $e_0,e_{25},e_{23}+e_{26},e_{35}-e_{56}$ is given in Table \ref{Table1}.
\begin{table}[hbt]
\begin{center}
\begin{tabular}{|c|c|c|c|c|}
\hline 
 & $1$ & $e_{25}$ & $e_{23}+e_{26}$ & $e_{35}-e_{56}$ \\ 
\hline
$1$ & $1$ & $e_{25}$ & $e_{23}+e_{26}$ & $e_{35}-e_{56}$ \\ 
\hline
$e_{25}$ & $e_{25}$ & -1 & $e_{35}-e_{56}$ & $-(e_{23}+e_{26})$ \\ 
\hline
$e_{23}+e_{26}$ & $e_{23}+e_{26}$ & $-(e_{35}-e_{56})$ & 0 & 0 \\ 
\hline
$e_{35}-e_{56}$ & $e_{35}-e_{56}$ & $e_{23}+e_{26}$ & 0 & 0 \\ 
\hline 
\end{tabular}
\caption{Multiplication table of planar displacements in $\clifford_{(4,2,0)}$}
\label{Table1}
\end{center}
\end{table}
Hence, this is indeed a subgroup of the Spin group. Furthermore, it is isomorphic to a subgroup of the multiplicative group of dual quaternions, called planar dual quaternions. We can define a bijection by
\[e_{25}\mapsto \mathbf{i},\quad (e_{23}+e_{26}) \mapsto \epsilon\mathbf{j},\quad (e_{35}-e_{56}) \mapsto \epsilon\mathbf{k}.\]

\begin{remark}
If we restrict ourself to reflections in lines and circles, we are able to describe the group of conformal transformations of the plane.
\end{remark}

\paragraph{Inversions applied to Points}
In this section we study the action of reflections with respect to a conic in principal position (centered at the origin) on points. The points are embedded as points of intersection of two lines, as discussed in the previous section. Furthermore, we have to note that the transformations map point pairs to point pairs. Hence, the pair of points of intersection of two lines (the affine and the ideal point) are mapped to a pair of points. All lines pass through $\infty$ and so the image of every line must pass through the image of this point. The generalization to conics not in principal position is obtained by the application of a coordinate transformation. The inversion conic is given by
\[\mathfrak{a}=\frac{1}{2}c_0 (e_3+e_6) +2 c_1 e_1 +2c_2 e_4.\]
A point is represented as intersection of two lines (see \eqref{EQ41} by
\[\mathfrak{p} = 2 e_{25}+y_0 (e_{23}+e_{26})+x_0(e_{35}-e_{56}).\]
Applying the sandwich operator to the point results in
\begin{equation*}
\mathfrak{p}'=\alpha(\mathfrak{a})\mathfrak{p}\mathfrak{a}^{-1}= -\frac{2y_0c_1}{c_0}e_{12}+\frac{2x_0c_1}{c_0}e_{15}-2e_{25}+\frac{2y_0c_2}{c_0}e_{24}+\frac{2x_0c_2}{c_0}e_{45}.
\end{equation*}
This is not of the form \eqref{EQ41}, and therefore, it is not the representation of the intersection of two lines. The GIPNS is calculated
\begin{align*}
\mathds{NI}_G(\mathfrak{p}')=&\left\lbrace (x,y)^\mathrm{T} \in\mathds{R}^2\mid -2c_1(-x_0y+xy_0)e_1-(y_0c_1x^2+2yc_0+y_0c_2y^2)e_2\right. \\
&\left. -2c_2(-x_0y+xy_0)e_4+(x_0c_1x^2+x_0c_2y^2+2xc_0)e_5=0\right\rbrace 
\end{align*}
The solution set of this GIPNS can be written as
\begin{equation*}
x=-\frac{2c_0x_0}{c_1x_0^2+c_2 y_0^2},\quad\quad
y=-\frac{2c_0y_0}{c_1x_0^2+c_2 y_0^2}.
\end{equation*}
Note that we excluded the solution $x=0,y=0$, that is the image of $\infty$ under the inversion.
\begin{remark}
Inversion with respect to conics that are not in principal position can be performed by the composition of an inversion and a rotation. Note, that this rotation has to be applied pointwise.
\end{remark}
\subsection{Generalization to higher Dimensions}
The main advantage of this geometric algebra model is its flexibility. It is no problem to change the dimension. We discuss the model for the $n$-dimensional case and we show some examples for the three-dimensional case. We start with a real vector space of dimension $n$. For each axis we use a conformal embedding. Therefore, the dimension of the geometric algebra is $2^{3n}$and its quadratic form is given by 
\[\mathrm{Q}= \underbrace{\begin{pmatrix}
\mathrm{D} &   &  & \\
  & \mathrm{D} &  & \\
  &   &  \ddots & \\
  &   &   &  \mathrm{D}
\end{pmatrix}}_{n\mbox{-times}},\quad \mathrm{D}=\begin{pmatrix}
0 & 0 & -1\\ 0 & 1 & 0\\ -1 & 0 &0
\end{pmatrix}.
\]
The embedding $\eta$ is realized
by
\begin{align}
\eta:\mathds{R}^n&\to{\bigwedge}^1 V,\nonumber\\
(x_1,\ldots,x_n)&\mapsto e_1+x_1 e_2+\frac{1}{2}x_1^2e_3+\ldots +e_{3n-2}+x_n e_{3n-1}+\frac{1}{2}x_n^2 e_{3n}.\label{EQ51}
\end{align}
The conditions for an embedded point \eqref{EQ22} and \eqref{EQ23} generalize to
\[\eta(P)^2=0,\quad\eta(P)_{x_1}^2=0,\quad\eta(P)_{x_2}^2=0,  \ldots\quad \eta(P)_{x_n}^2=0.\]
We define analogue to Def. \ref{DEF8}:
\begin{definition}
The blade $\mathfrak{I}$ that maps outer product null spaces to inner product null spaces is defined by
\[\mathfrak{I}=\bigwedge\limits_{\substack{i=1\\i\mod 3=2}}^n e_i \wedge \bigwedge\limits_{\substack{j=1\\j\mod 3=1}}^n e_j\wedge \sum\limits_{\substack{k=1\\k\mod 3=0}}^n e_k .\]
Inner product null spaces can be mapped to outer product null spaces with the blade
\[\mathfrak{I}^*:=\bigwedge\limits_{\substack{i=1\\i\mod 3=2}}^n e_i \wedge \bigwedge\limits_{\substack{j=1\\j\mod 3=0}}^n e_j\wedge \sum\limits_{\substack{k=1\\k\mod 3=1}}^n e_k.\]
\end{definition}
\noindent
Grade-1 elements correspond to inner product axis aligned hyperquadrics. As the dimension is growing, the number of objects that can be represented grows, too. Blades of grade $k, (k\leq n)$ correspond to the intersection of $k$ hyperquadrics.

\paragraph{Quadrics in $3$ Dimensions}
To construct the quadric geometric algebra for the three-dimensional space we use the quadratic space $\mathds{R}^{(6,3)}$ given by the nine-dimensional real vector space $\mathds{R}^9$ together with the quadratic form
\[\mathrm{Q}= \begin{pmatrix}
\mathrm{D} &   & \\
  & \mathrm{D} & \\
  &   &  \mathrm{D}
\end{pmatrix},\quad \mathrm{D}=\begin{pmatrix}
0 & 0 & -1\\ 0 & 1 & 0\\ -1 & 0 &0
\end{pmatrix}.
\]
For three dimensions the embedding $\eta$, see Eq \eqref{EQ51}, has the following form
\begin{align}
\eta:\mathds{R}^3&\to {\bigwedge}^1{V},\nonumber\\
(x,y,z)^\mathrm{T}&\mapsto e_1+xe_2+\frac{1}{2}x^2 e_3+e_4+ye_5+\frac{1}{2}y^2 e_6+e_7+ze_8+\frac{1}{2}z^2 e_9\nonumber.
\end{align}
The conditions for an embedded point \eqref{EQ22} and \eqref{EQ23} generalize to
\[\eta(P)^2=0,\quad \eta(P)_x^2=0,\quad\eta(P)_y^2=0,\quad \eta(P)_z^2=0.\]
Moreover, the blades $\mathfrak{I}$ and $\mathfrak{I}^*$ are given by
\begin{align}
\mathfrak{I}&=(e_2\wedge e_5\wedge e_8) \wedge (e_1\wedge e_4\wedge e_7)\wedge (e_3+e_6+e_9),\nonumber\\
\mathfrak{I}^*&=(e_2\wedge e_5\wedge e_8) \wedge (e_3\wedge e_6\wedge e_9)\wedge (e_1+e_4+e_7)\nonumber.
\end{align}
The corresponding geometric algebra has dimension $2^9=512$. Any quadric in principal position except for translation in $\mathds{R}^3$ is uniquely determined by six values. This can be seen from the symmetric matrix of the equation of the quadric that has, in general, ten free entries. The fact that we are treating quadrics in principal position reduces the number of free entries to seven. Furthermore, this matrix representation is homogeneous, and therefore, we have six degrees of freedom. In analogy to Eq. \eqref{EQ37} we obtain a bijection $\chi$ that is defined as 
\begin{equation*}
\begin{pmatrix}
c_0&c_2&c_5&c_8\\
c_2 & c_1 &0 &0\\
c_5 & 0 & c_4 & 0\\
c_8 &0 & 0 & c_7
\end{pmatrix}\mapsto \mathfrak{q},
\end{equation*}
with
\[\mathfrak{q}=2c_1e_1\!-\!2c_2e_2\!+\!\frac{1}{3}c_0e_3\!+\!2c_4e_4\!-\!2c_5e_5+\!\frac{1}{3}c_0e_6 \!+\!2c_7e_7\!-\!2c_8e_8\!+\!\frac{1}{3}c_0e_9.\]
As we did for the planar case, we can now pay our attention on the intersection of three planes in order to get a pair of points containing one affine and one ideal point. We choose these planes to be parallel to the coordinate planes and passing through a given point $P=(x_0,y_0,z_0)^\mathrm{T}$. Expressed in terms of the quadric geometric algebra $\clifford_{(6,3,0)}$ we get
\begin{align*}
\mathfrak{p}&=(\eta(x_0,y_0,z_0)^\mathrm{T}\wedge\eta(0,0,z_0)^\mathrm{T}\wedge\eta(x_0,0,z_0)^\mathrm{T}\wedge e_3\wedge e_6\wedge e_9)\cdot \mathfrak{I}\\
& \wedge (\eta(x_0,y_0,z_0)^\mathrm{T}\wedge \eta(0,y_0,0)^\mathrm{T}\wedge\eta(x_0,y_0,0)^\mathrm{T}\wedge e_3\wedge e_6\wedge e_9)\cdot \mathfrak{I}\\
& \wedge (\eta(x_0,y_0,z_0)^\mathrm{T}\wedge\eta(x_0,0,0)^\mathrm{T}\wedge\eta(x_0,y_0,0)^\mathrm{T}\wedge e_3\wedge e_6\wedge e_9)\cdot \mathfrak{I}\\
&=3e_{258}+(e_{358}-e_{568}+e_{589})x_0+({e_{238}+e_{268}-e_{289}})y_0\\
&+(-e_{235}+e_{256}+e_{259})z_0.
\end{align*}
\begin{remark}
A representation of the group of Euclidean displacements $\mathrm{SE}(3)$ can be obtained by studying the composition of reflections in planes. Planes correspond to vectors that are obtained by $(\eta P_1\wedge\eta P_2\wedge\eta P_3\wedge e_3 \wedge e_6 \wedge e_9)\cdot\mathfrak{I}$.
\end{remark}
\noindent Now we define an inner product inversion quadric with $P_1=( \frac{9}{10},0,0)^\mathrm{T},\,P_2=( -\frac{9}{10},0,0)^\mathrm{T},$ $P_3=( 0,\frac{3}{4},0)^\mathrm{T},\,P_4=( 0,-\frac{3}{4},0)^\mathrm{T},\,P_5=( 0,0,\frac{5}{4})^\mathrm{T},\,P_6=( 0,0,-\frac{5}{4})^\mathrm{T}$
\begin{equation*}
\mathfrak{a}=(\eta P_1\wedge \eta P_2\wedge \eta P_3 \wedge \eta P_4\wedge \eta P_5\wedge \eta P_6)\cdot \mathfrak{I}=\frac{25}{9}e_1-\frac{3}{8}e_3+4e_4-\frac{3}{8}e_6+\frac{36}{25}e_7-\frac{3}{8}e_9.
\end{equation*}
The GIPNS of $\mathfrak{a}$ is given by
\[\mathds{NI}_G(\mathfrak{a})=\left\lbrace (x,y,z)^\mathrm{T}\in\mathds{R}^3 \Big|\, \frac{100}{81}x^2+\frac{16}{9}y^2+\frac{16}{25}z^2-1=0\right\rbrace.\]
In Fig. \ref{FIG2} (left) we can see the inversion of the unit cube $[-1,1]^3$ with respect to the ellipsoid defined by $\mathfrak{a}$. The image of every face of the cube, {\it i.e.}, of every plane is an ellipsoid passing through the origin.
\noindent
The action of the inversion given by $\mathfrak{a}$ on pairs of points can be written as
\begin{equation*}
f(x,y,z)=\frac{45^2}{2^2(30^2 y^2+25^2x^2+18^2z^2)}\left(x,y,z  \right) ^\mathrm{T}.
\end{equation*}
Note that the point $\infty$ is mapped to the origin and that the origin is mapped to $\infty$. Therefore, we have a map from $\mathds{R}^3\backslash\left\lbrace 0 \right\rbrace \to \mathds{R}^3$.
\begin{figure}[t]
        \begin{center}
        \begin{minipage}[b]{0.50\textwidth}
                \centering
                \begin{overpic}[width=\textwidth]{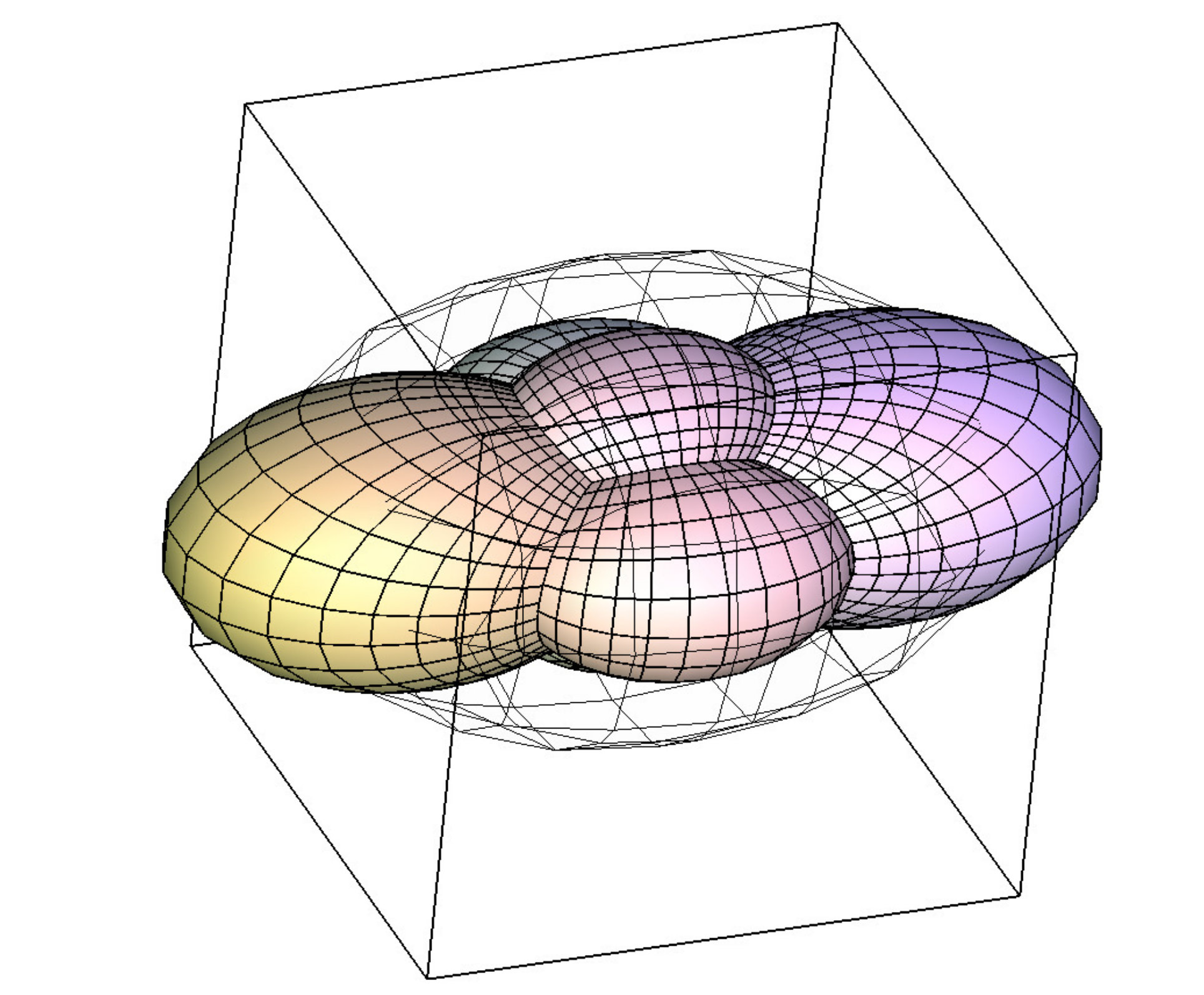}
					\put(55,62){$\mathfrak{a}$}
				\end{overpic}
        \end{minipage}%
        \hfill
        \begin{minipage}[b]{0.50\textwidth}
                \centering
                \begin{overpic}[width=\textwidth]{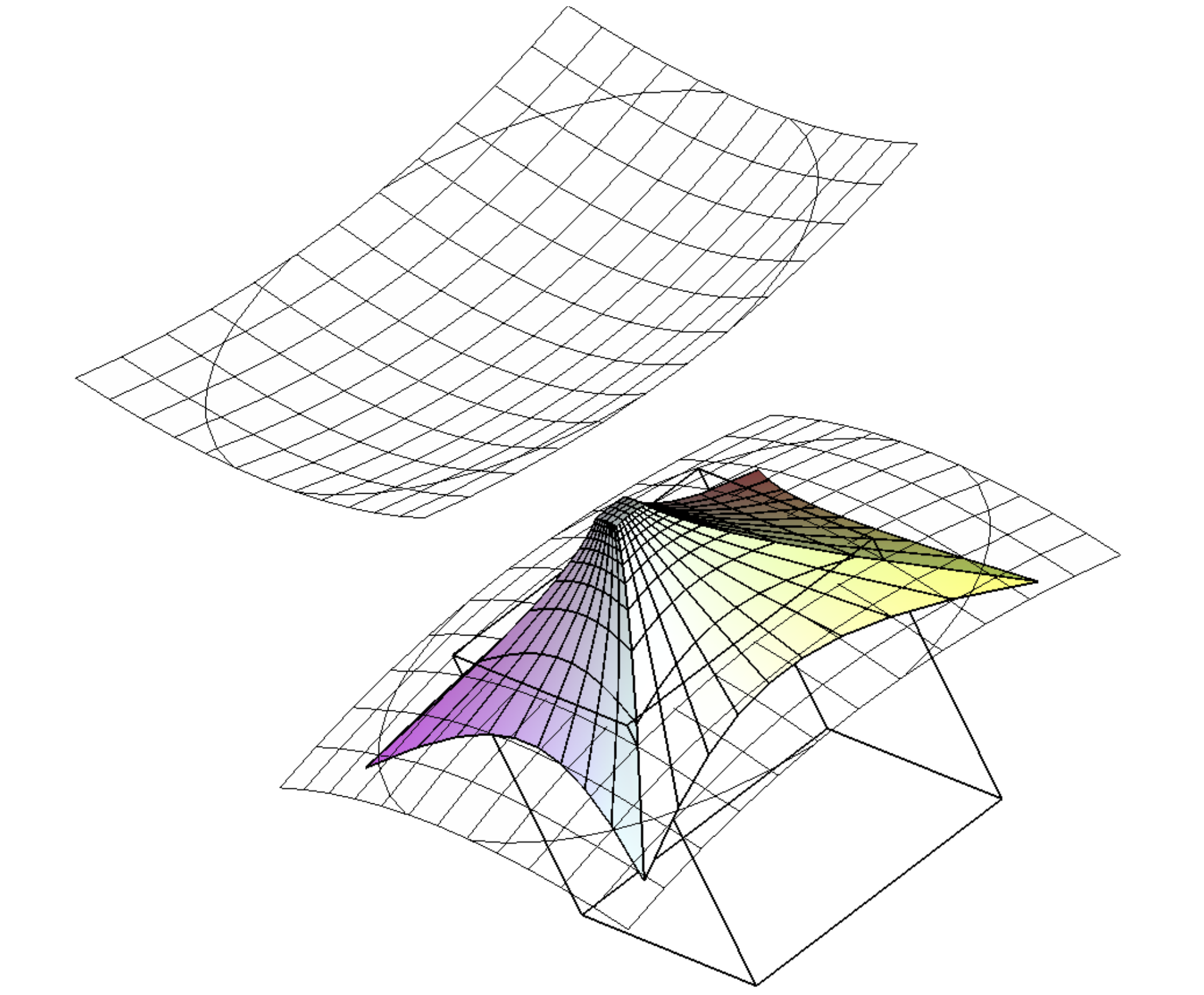}
					\put(12,55){$\mathfrak{a}$}
				\end{overpic}
        \end{minipage}
        \end{center}
        \caption{Inversion of the unit cube with respect to an ellipsoid (left), inversion with respect to a hyperboloid (right)}
        \label{FIG2}
\end{figure}

One main advantage of this method is that we can calculate the image ellipsoid of one face of the cube (one plane) directly by applying the sandwich operator to the plane that is expressed as quadric. For example the plane passing through $P_1=(1,1,1)^\mathrm{T},\,P_2=(1,1,-1)^\mathrm{T},\,P_3=(1,-1,1)^\mathrm{T}$ can be expressed as vector by
\begin{figure}[t]
        \begin{center}
        \begin{minipage}[b]{0.48\textwidth}
                \centering
                \begin{overpic}[width=\textwidth]{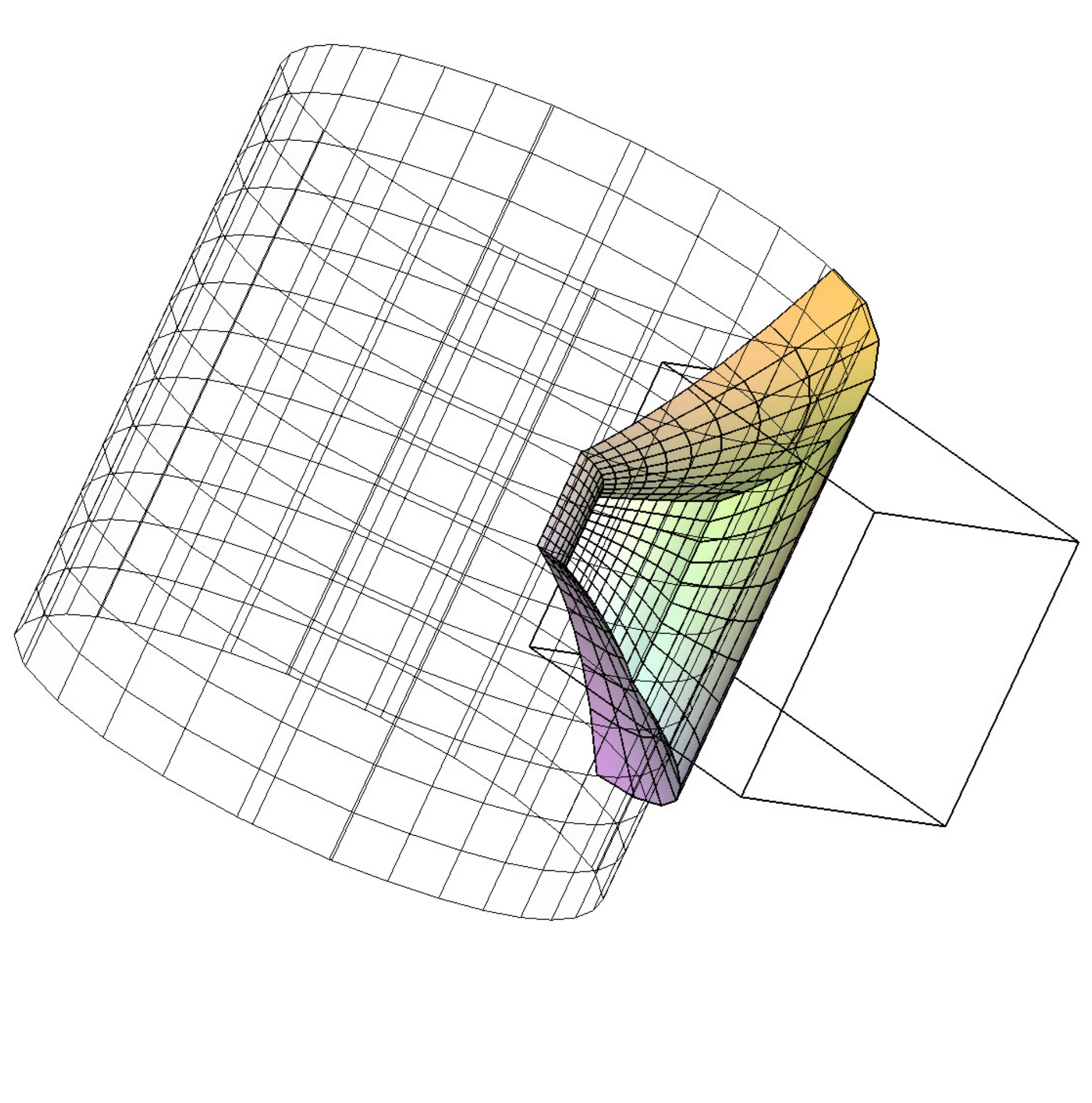}
					\put(10,67){$\mathfrak{a}$}
				\end{overpic}
        \end{minipage}%
        \hfill
        ~ 
        \begin{minipage}[b]{0.48\textwidth}
                \centering
                \begin{overpic}[width=\textwidth]{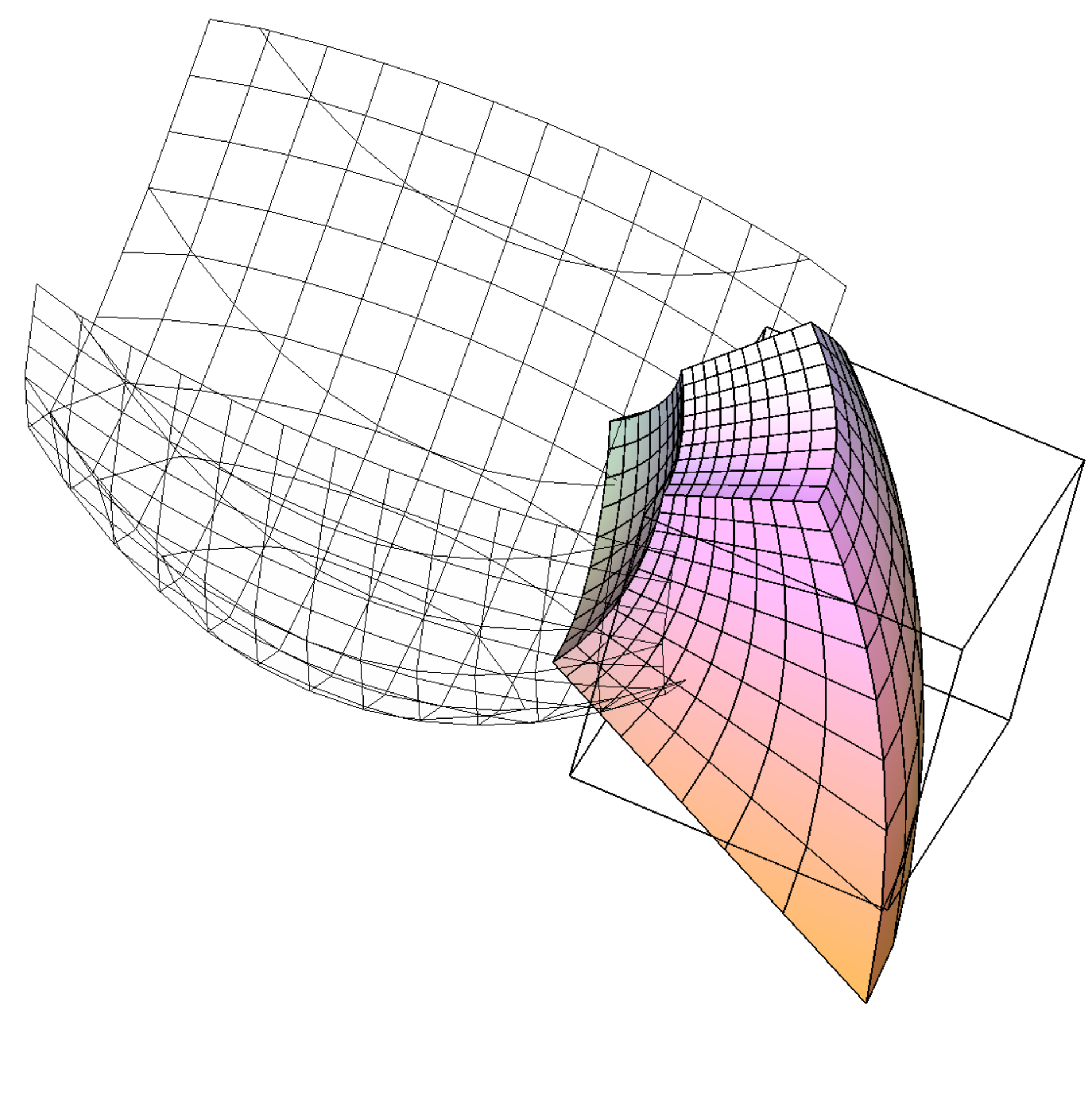}
					\put(6,73){$\mathfrak{a}$}
				\end{overpic}
        \end{minipage}
        \end{center}
        \caption{Inversion with respect to a cylinder (left), inversion with respect to an elliptic paraboloid (right)}
        \label{FIG3}
\end{figure}
\begin{equation*}
\mathfrak{p}=(\eta P_1\wedge\eta P_2\wedge\eta P_3\wedge e_3\wedge e_6 \wedge e_9)\cdot \mathfrak{I}=3e_2+e_3+e_6+e_9.
\end{equation*}
To check that that this indeed the representation of the plane we compute the GIPNS
\[\mathds{NI}_G(\mathfrak{p})=\left\lbrace (x,y,z)^\mathrm{T}\in\mathds{R}^3\mid 1-x=0 \right\rbrace.\]
Applying the sandwich operator to $\mathfrak{p}$ results in
\begin{equation*}
\mathfrak{c}=\alpha(\mathfrak{a})\mathfrak{p} \mathfrak{a}^{-1}=-\frac{200}{27}e_1-3e_2-\frac{32}{3}e_4-\frac{96}{25}e_7
\end{equation*}
with GIPNS
\[\mathds{NI}_G(\mathfrak{c})=\left\lbrace (x,y,z)^\mathrm{T} \in\mathds{R}^3\mid \frac{10^2}{9^2}x^2-x + \frac{4^2}{3^2}y^2+\frac{4^2}{5^2}z^2=0 \right\rbrace. \]
This is one of the ellipsoids displayed in Fig. \ref{FIG2} (left). Furthermore, we can intersect two inner product planes $\mathfrak{p}_1,\mathfrak{p}_2$ to get an inner product line that is an edge of the cube. After that we can apply the sandwich operator to the line and get the intersection curve (an ellipse) of the two ellipsoids that are the images of $\mathfrak{p}_1,\mathfrak{p}_2$.
\FloatBarrier
\begin{remark}
It is more convenient to compute the sandwich operator by a conjugation instead inversion. This means we use $\alpha(\mathfrak{a}) \mathfrak{X} \mathfrak{a}^\ast$ with $\mathfrak{a}\in\clifford^\times_{(p,q,r)}$ and $\mathfrak{X}\in{\bigwedge}^kV$. In general, the modified sandwich operator is easier to handle, because computing the inverse of a Clifford algebra element is extremely expensive. Moreover, we are working in a projective setting, and therefore, multiplication with a homogeneous factor does not change the occurring geometric inner product and outer product null spaces.
\end{remark}
The second example in three-dimensional space is a hyperboloid of two sheets in principal position that is generated by
\[\mathfrak{a}=(\eta P_1\wedge\eta P_2\wedge\eta P_3\wedge\eta P_4\wedge\eta P_5\wedge\eta P_6)\cdot \mathfrak{I}=-6e_1+e_3+\frac{9}{8}e_4+e_6+\frac{9}{8}e_7+e_9.\]
Here, we have $P_1\!=\!(-1,0,0)^\mathrm{T},\,P_2\!=\!(1,0,0)^\mathrm{T},\,P_3\!=\!(2,0,4)^\mathrm{T},\,P_4\!=\!(2,0,-4)^\mathrm{T},$ $P_5\!=\!(2,4,0)^\mathrm{T}$, $P_6\!=\!(2,-4,0)^\mathrm{T}$. 
We calculate the GIPNS 
\[\mathds{NI}_G(\mathfrak{a})=\left\lbrace (x,y,z)^\mathrm{T}\in\mathds{R}^3\mid x^2-1-\frac{3}{16}y^2-\frac{3}{16}z^2=0 \right\rbrace. \]
The mapping applied to pairs of points results in
\[f(x,y,z)=\frac{16}{16x^2-3y^2-3z^2}(x,y,z)^\mathrm{T}.\]
The image of the cube $[1,3]\times[-1,1]\times[-1,1]$ under this mapping is shown in Fig.\ \ref{FIG2} (right).
Fig.\ \ref{FIG3} (left) shows the image of an inversion with respect to a cylinder given by $\mathfrak{a}=3e_1-2e_3+3e_4-2e_6-2e_9$ applied to a cube. The equation of the cylinder is derived as $x^2+y^2=4$. Planes that are not parallel to the axis of the cylinder are mapped to paraboloids. Another example is presented in Fig.\ \ref{FIG3} (right). The inversion quadric is an elliptic paraboloid given by $\mathfrak{a}=3e_1-2e_3+12e_4-2e_6+6e_8-2e_9$ respectively by $x^2+y^2-4z+4=0$.
\FloatBarrier

\section{Conclusion}
\noindent The geometric algebra presented in this article serves for a lot of applications. A generalization of inversions with respect to conics, quadrics and even hyperquadrics in any dimension is possible with the use of the sandwich operator. Hyperquadrics in principal position are simply represented as grade-1 elements. Furthermore, this model serves as a generalization of the conformal geometric algebra, see \cite{dorst:geometricalgebra}. Classical representations of groups are embedded in this algebra naturally.

\section*{acknowledgement}
This work was supported by the research project "Line Geometry for Lightweight Structures", funded by the DFG (German Research Foundation) as part of the SPP 1542.




\begin{thebibliography}{99}
%
%
   \bibitem{dorst:geometricalgebra}
   {\sc Dorst, L; Fontijne, D. and Mann, S.}: {\em Geometric Algebra for Computer Science}.
    Morgan Kaufmann Publishers,\ 2007.
   \bibitem{dorst:gaviewer}
   {\sc Dorst, L; Fontijne, D.}: {\em 3D Euclidean Geometry Through Conformal Geometric Algebra (a GAViewer tutorial)}.
    2005, tutorial, available at
    \url{http://www.science.uva.nl/ga/files/CGAtutorial_v1.3.pdf}
   \bibitem{fontijne:EfficientImplementationofGeometricAlgebra}
   {\sc Fontijne, D.}: {\em Clifford Algebras and the Classical Groups}.
   Morgan Kaufmann Publishers,\ 2007.
   \bibitem{gallier:cliffordalgebrascliffordgroups}
   {\sc Gallier, J.}: {\em Clifford Algebras, Clifford Groups, and a Generalization of the Quaternions: The Pin and Spin Groups}.
    2011, unpublished, available at \url{http://www.cis.upenn.edu/~cis610/clifford.pdf}.
 \bibitem{hestenes:cliffordalgebra}
   {\sc Hestenes D. and Sobczyk, G.}: {\em Clifford Algebra to Geometric Calculus}.
   Kluwer Academic Publishers,\ 1984.    
   \bibitem{perwass:geometricalgebra}
   {\sc Perwass, C.}: {\em Geometric Algebra with Applications in Engineering}.
   Springer-Verlag Berlin Heidelberg,\ 2009.
    \bibitem{porteous:cliffordalgebrasandtheclassicalgroups}
   {\sc Porteous, Ian R.}: {\em Clifford Algebras and the Classical Groups}.
   Cambridge University Press,\ 1995.
   \bibitem{selig:geometricfundamentalsofrobotics}
   {\sc Selig, J.M.}: {\em Geometric Fundamentals of Robotics}.
   2nd ed., Springer,\ 2005.
   \bibitem{zamora:geometricalgebra}
   {\sc Zamora-Esquivel, J.}: {\em $G_{6,3}$ Geometric Algebra; Description and Implementation}.
   Advances in Applied Clifford Algebras 24 (2), 493-514,\ 2014.
\end{thebibliography}


\end{JGGarticle}
\end{document}